\theoremstyle{definition}
\newtheorem{theorem}{Theorem}[section]
\newtheorem{lemma}[theorem]{Lemma}
\newtheorem{corollary}[theorem]{Corollary}
\theoremstyle{definition}
\newtheorem{definition}[theorem]{Definition}
\theoremstyle{remark}
\numberwithin{equation}{section}
\numberwithin{equation}{section}
\begin{document}

\title{The Colored Kauffman Skein relation and the head and tail of the colored Jones polynomial}

%%\author{Oliver Dasbach}
%%\address{Department of Mathematics, Louisiana State University, 
%%Baton Rouge, LA 70803 USA}
%%\email{kasten@math.lsu.edu}
%%\urladdr{www.math.sc.edu/$\sim$howard} % Delete if not wanted.

%%
%% If there is another author uncomment and edit the following.
%%

\author{Mustafa Hajij}
\address{Department of Mathematics, Louisiana State University, 
Baton Rouge, LA 70803 USA}
\email{mhajij1@math.lsu.edu}

%%
%% If there are three of more authors they are added in the obvious
%% way. 
%%

%%%
%%% The following is for the abstract.  The abstract is optional and
%%% if not used just delete, or comment out, the following.
%%%

\begin{abstract}
Using the colored Kauffman skein relation, we study the highest and lowest $4n$ coefficients of the $n^{th}$ unreduced colored Jones polynomial of alternating links. This gives a natural extension of a result by Kauffman in regard with the Jones polynomial of alternating links and its highest and lowest coefficients. We also use our techniques to give a new and natural proof for the existence of the tail of the colored Jones polynomial for alternating links.   
\end{abstract}

 \maketitle

\section{Introduction}
The unreduced colored Jones polynomial is link invariant that assigns to each link $L$ a sequence of Laurent polynomials $\tilde{J}_{n,L}$ indexed by a positive integer $n$, the color. In \cite{Dasbach} Dasbach and Lin conjectured that, up to a common sign change, the highest $4(n+1)$ (the lowest resp.) coefficients of the polynomial $\tilde{J}_{n,L}(A)$ of an alternating link $L$ agree with the first $4(n+1)$ coefficient of the polynomial $\tilde{J}_{n+1,L}(A)$ for all $n$. This gives rise to two power series with integer coefficients associated with the alternating link $L$ called the head and the tail of the colored Jones polynomial. The existence of the head and tail of the colored Jones polynomial of adequate links, a class of links that contains alternating links, was proven by Armond in \cite{Cody2} using skein theory. Independently, this was shown by Garoufalidis and Le for alternating links using $R$-matrices \cite{klb} and generalized to higher order tails.\\

Write $\mathcal{S}(S^{3})$ to denote the Kauffman Bracket Skein Module of $S^{3}$. Let $L$ be an alternating link and let $D$ be a reduced link diagram of $L$. Write $S_A(D)$ to denote the $A$-smoothing state of $D$, the state obtained by replacing each crossing by an $A$-smoothing. The state $S_B(D)$ of $D$ is defined similarly. Using the Kauffman skein relation Kauffman \cite{Kauffman0} showed that the $A$ state (respectively the $B$ state) realizes the highest (respectively the lowest) coefficient of the Jones polynomial of an alternating link. We extend this result to the colored Jones polynomial by using \textit{the colored Kauffman skein relation} \ref{section3}. We show that the $n$-colored $A$ state and the $n$-colored $B$ state realize the highest and the lowest $4n$ coefficients of the $n^{th}$ unreduced colored Jones polynomial of an alternating link. Furthermore we show that this gives a natural layout to prove the stability of the highest and lowest coefficients of the colored Jones polynomial of alternating links. In other words we prove that the head and the tail of the colored Jones polynomial exist.
\section{background}
In this section we recall the definition of adequate links and its connection to the Kauffman states. We also review the basic of the Kauffman bracket skein module and the Jones-Wenzl idempotent.
\subsection{Alternating Links}
Let $L$ be a link in $S^3$ and $D$ be an alternating knot diagram of $L$. For any crossing in $D$ there are two ways to smooth this crossing, the $A$-smoothing and the $B$-smoothing. See Figure \ref{smoothings1}.
\begin{figure}[H]
  \centering
   {\includegraphics[scale=0.14]{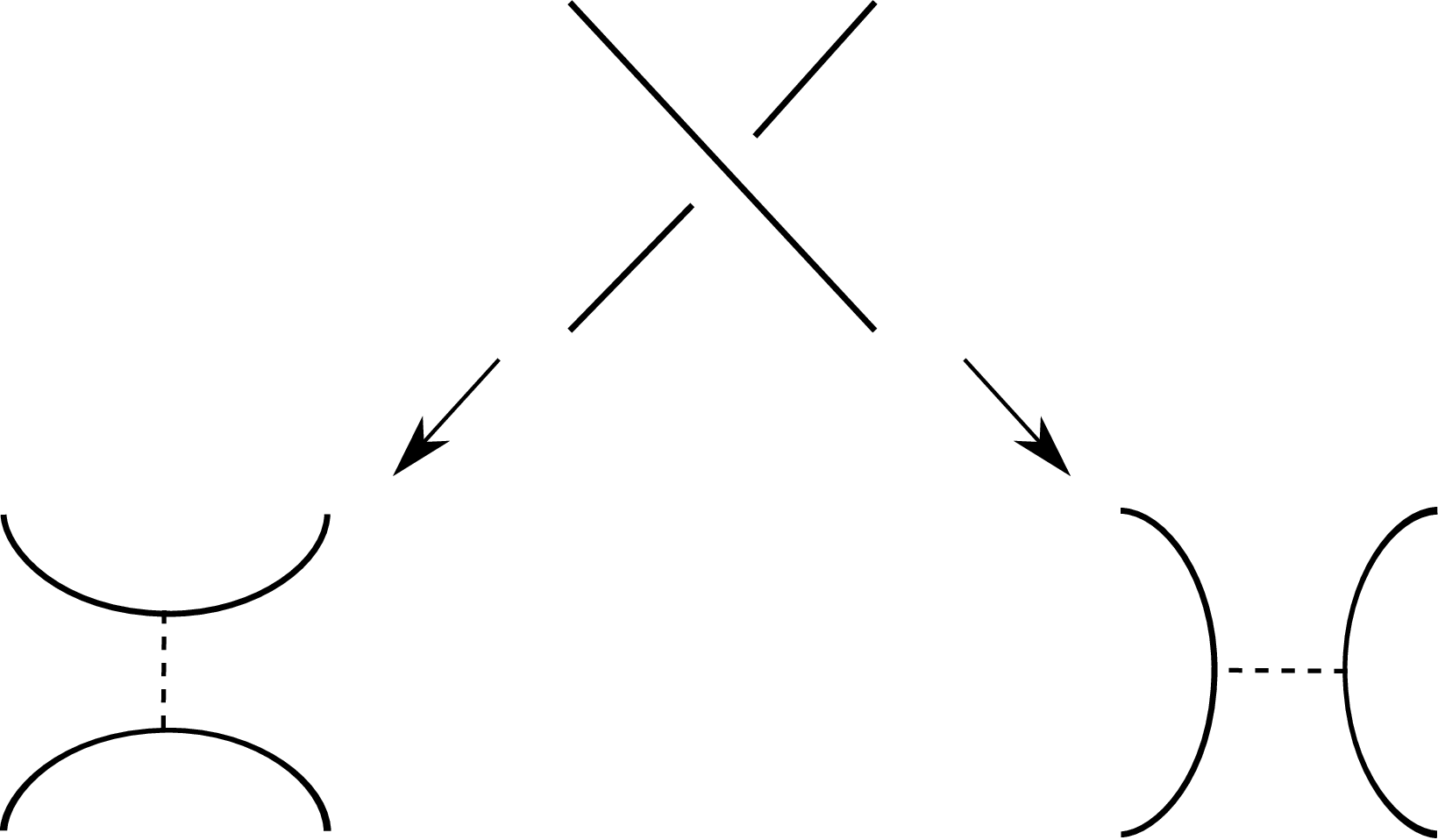}
    \put(-88,33){$A$}
          \put(-30,33){$B$}
    \caption{A and B smoothings}
  \label{smoothings1}}
\end{figure}

We replace a crossing with a smoothing together with a dashed line joining the two arcs. After applying a smoothing to each crossing in $D$ we obtain a planar diagram consisting of collection of disjoint circles in the plane. We call this diagram a \textit{state} for the diagram $D$. The $A$-smoothing state, obtained from $D$ by replacing every crossing by an $A$ smoothing, and the $B$-smoothing state for $D$ are of particular importance for us. Write $S_A(D)$ and $S_B(D)$ to denote the $A$ smoothing and $B$ smoothing states of $D$ respectively. For each state $S$ of a link diagram $D$ one can associate a graph $G_{S(D)}$ obtained by replacing each circle of $S$ by a vertex and each dashed link by an edges. 
\begin{definition}
A link diagram $D$ is called $A$-adequate ($B$-adequate, respectively) if there are no loops in the graph $G_{S_A(D)}$ (the graph $G_{S_A(D)}$, respectively). A link diagram $D$ is called adequate if it is both $A$-adequate and $B$-adequate.
\end{definition}
It is known that a reduced alternating link diagram is adequate. See for example \cite{Lickorish1}. It is important to see that the if a link diagram is $A$-adequate then any state that has only one $B$-smoothing will have one fewer circle than the $A$-smoothing state.
\subsection{Skein Theory}
Let $M$ denotes a $3$-manifold that is homeomorphic to $F\times [0,1]$, where $F$ is a connected oriented surface. We will also use $ \mathbb{Q}(A)$ to denote the field generated by the indeterminate $A$ over the rational numbers.
\begin{definition}(J. Przytycki \cite{Przytycki} and V. Turaev \cite{tur})
 The \textit{Kauffman Bracket Skein Module} of $M=F\times I$, denoted by $\mathcal{S}(F)$, is the $ \mathbb{Q}(A)$-free module generated by the set of isotopy classes of framed links in $M$(including the empty knot) modulo the submodule generated by the Kauffman relations \cite{Kauffman1}:
\begin{eqnarray*}(1)\hspace{3 mm}
  \begin{minipage}[h]{0.06\linewidth}
        \vspace{0pt}
        \scalebox{0.04}{\includegraphics{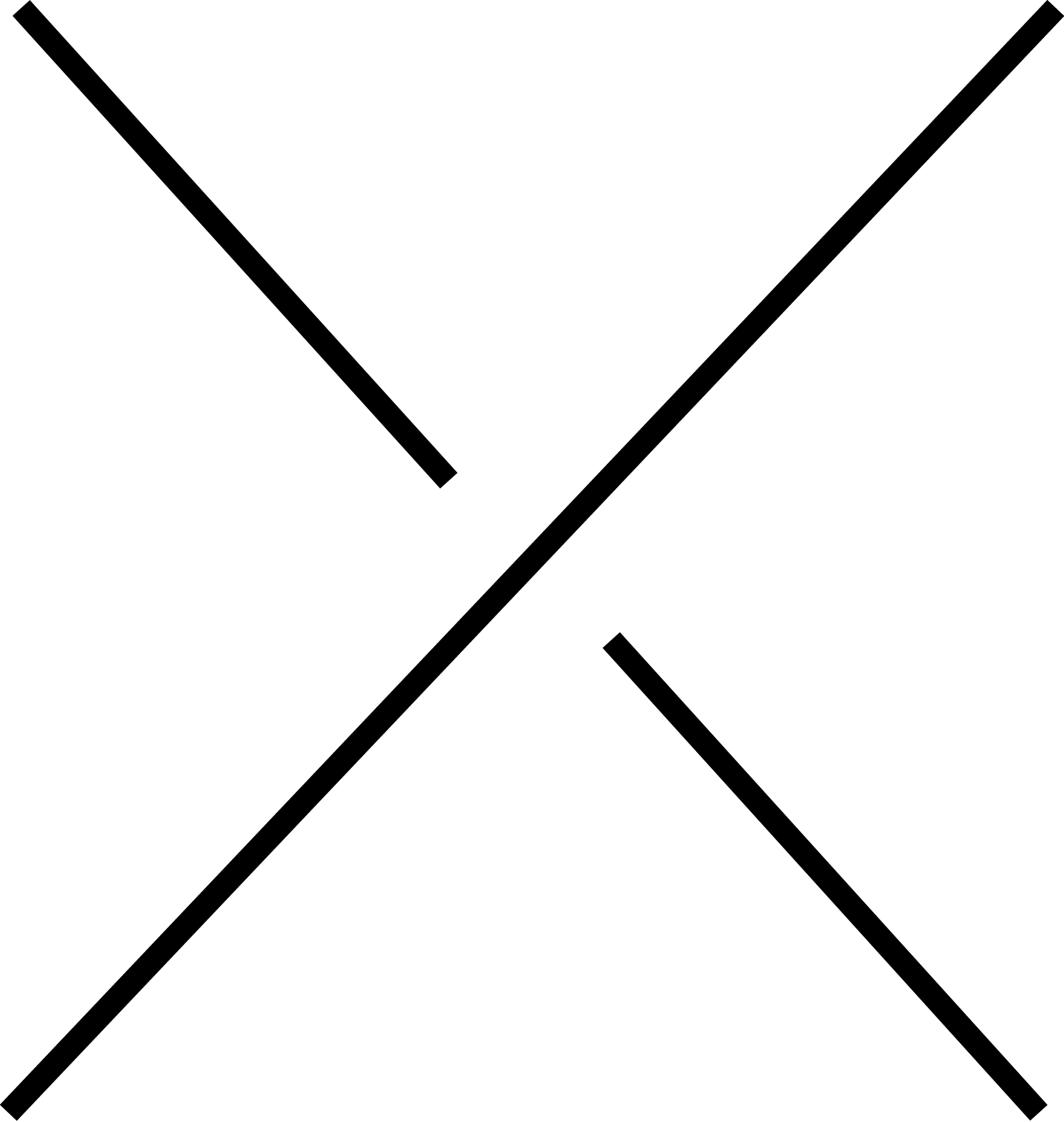}}
   \end{minipage}
   -
     A 
  \begin{minipage}[h]{0.06\linewidth}
        \vspace{0pt}
        \scalebox{0.04}{\includegraphics{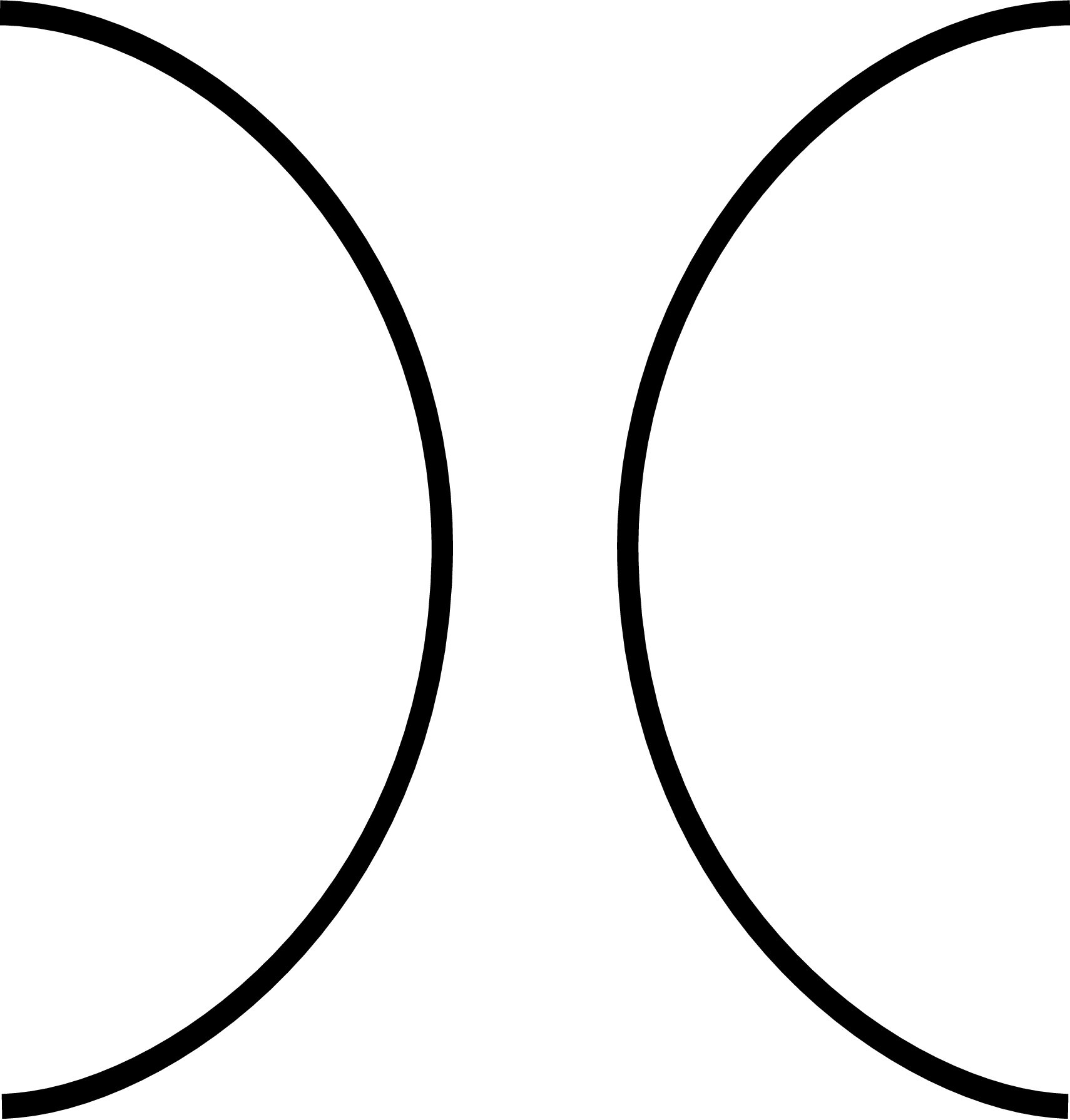}}
   \end{minipage}
   -
  A^{-1} 
  \begin{minipage}[h]{0.06\linewidth}
        \vspace{0pt}
        \scalebox{0.04}{\includegraphics{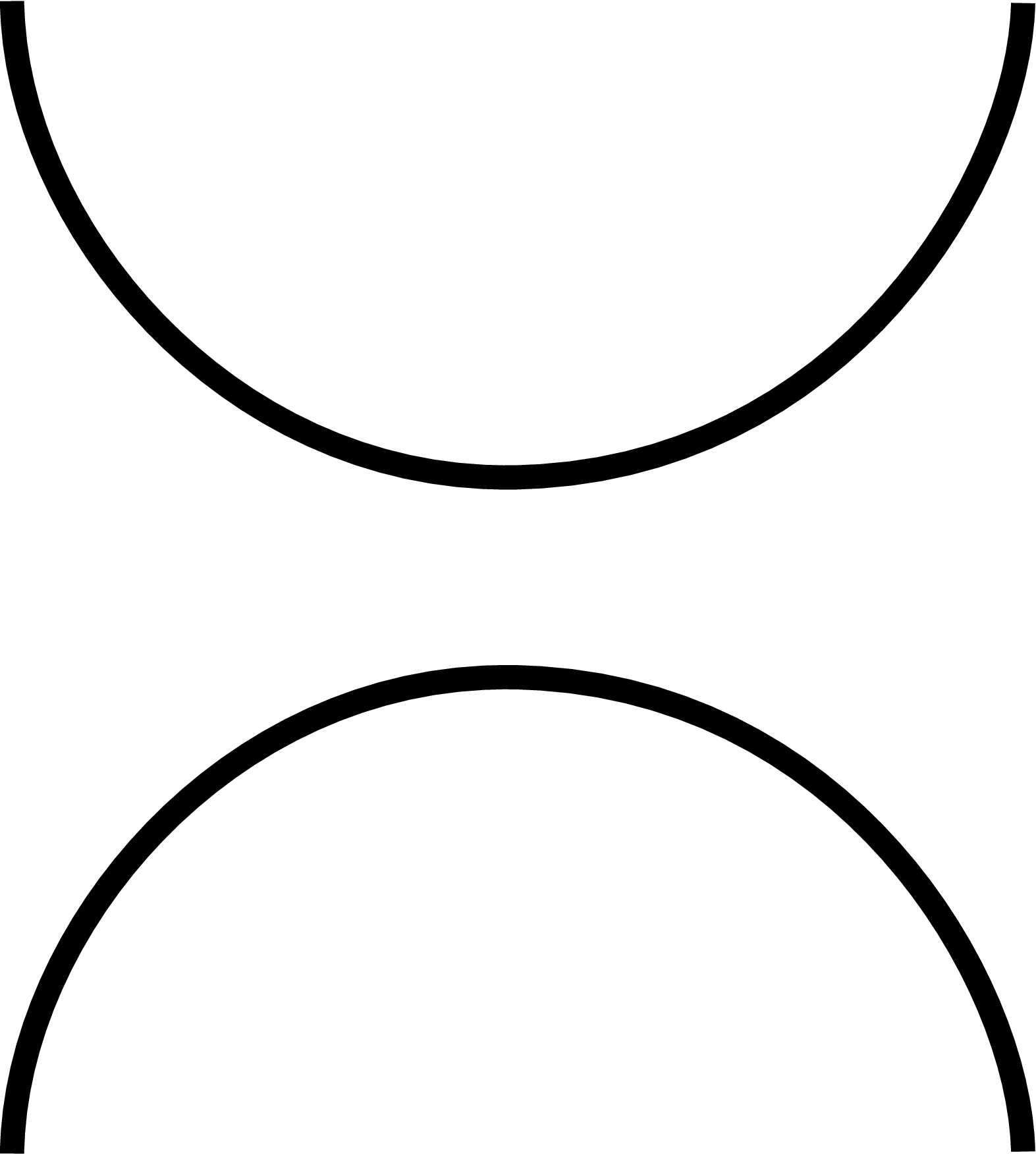}}
   \end{minipage}
, \hspace{20 mm}
  (2)\hspace{3 mm} L\sqcup
   \begin{minipage}[h]{0.05\linewidth}
        \vspace{0pt}
        \scalebox{0.02}{\includegraphics{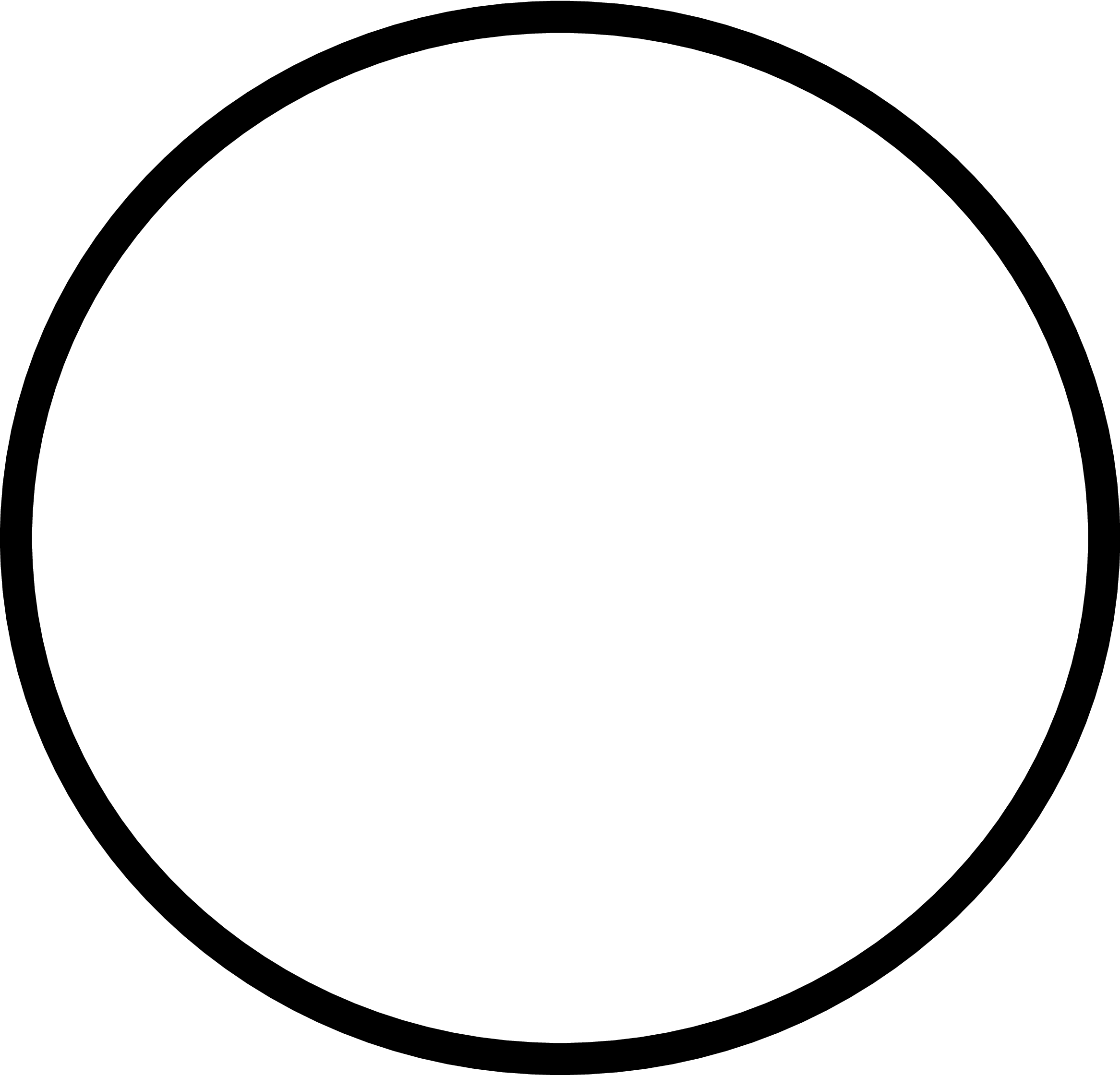}}
   \end{minipage}
  +
  (A^{2}+A^{-2})L. 
  \end{eqnarray*}
where $L\sqcup$ \begin{minipage}[h]{0.05\linewidth}
        \vspace{0pt}
        \scalebox{0.02}{\includegraphics{simple-circle}}
   \end{minipage}  consists of a framed link $L$ in $M$ and the trivial framed knot 
   \begin{minipage}[h]{0.05\linewidth}
        \vspace{0pt}
        \scalebox{0.02}{\includegraphics{simple-circle}}
   \end{minipage}.
 \end{definition}    
Relation (1) is usually called \textit{the Kauffman skein relation}. Since the manifold $M$ is homeomorphic to $F\times [0,1]$ one may consider an appropriate version of link diagrams in $F$ to represent the classes of $\mathcal{S}(F)$ instead of framed links in $M$. If the manifold $M$ has a boundary then one could define a relative version of the Kauffman bracket skein module.

Our first example is the Kauffman bracket skein module of the sphere $\mathcal{S}(S^{2})$. This module can be easily seen to be isomorphic to $ \mathbb{Q}(A)$. This isomorphism is provided by the Kauffman bracket  and it is induced by sending $D$ to $\langle D \rangle$. In particular it send the empty link to $1$. Along with the module $\mathcal{S}(S^{2})$, we will also need the relative skein module $\mathcal{S}(D^2,2n)$, where the rectangular disk $D^2$ has $n$ designated points on the top edge and $n$ designated points on the bottom edge. In fact, the module $\mathcal{S}(D^2,2n)$ admits a multiplication given by vertical concatenation of two diagrams in $\mathcal{S}(D^2,2n)$. With this multiplication $\mathcal{S}(D^2,2n)$ becomes an associative algebra over $Q(A)$ known as the \textit{$n^{th}$ Temperley-Lieb algebra} $TL_n$.\\

For each positive integer $n$, the algebra $TL_n$ contains an element of special importance to us.  The$n^{th}$ \textit{Jones-Wenzl idempotent} (projector), denoted $f^{(n)}$, was first discovered by Jones \cite{Jones}. We shall adapt a diagrammatic presentation for Jones-Wenzel projector that is due Lickorish \cite{Lickorish3}. In this notation one thinks of $f^{(n)}$ as an empty box with $n$ strands entering and $n$ strands leaving the opposite side. See Figure \ref{JW}.
\begin{figure}[H]
  \centering
   {\includegraphics[scale=0.13]{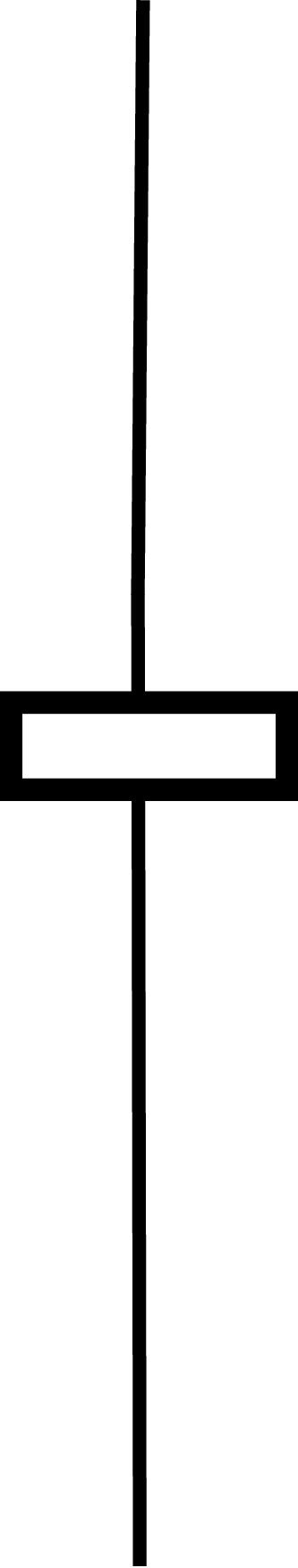}
   \put(-20,+70){\footnotesize{$n$}}
    \caption{The $n^{th}$ Jones-Wenzl idempotent}
  \label{JW}}
\end{figure}
The Jones-Wenzl idempotent has two defining properties. The defining properties of $f^{(n)}$ are:
\begin{align}
  (1)\hspace{-15 mm}\label{properties8}
\hspace{0 mm}
  \begin{minipage}[h]{0.21\linewidth}
        \vspace{0pt}
        \scalebox{0.115}{\includegraphics{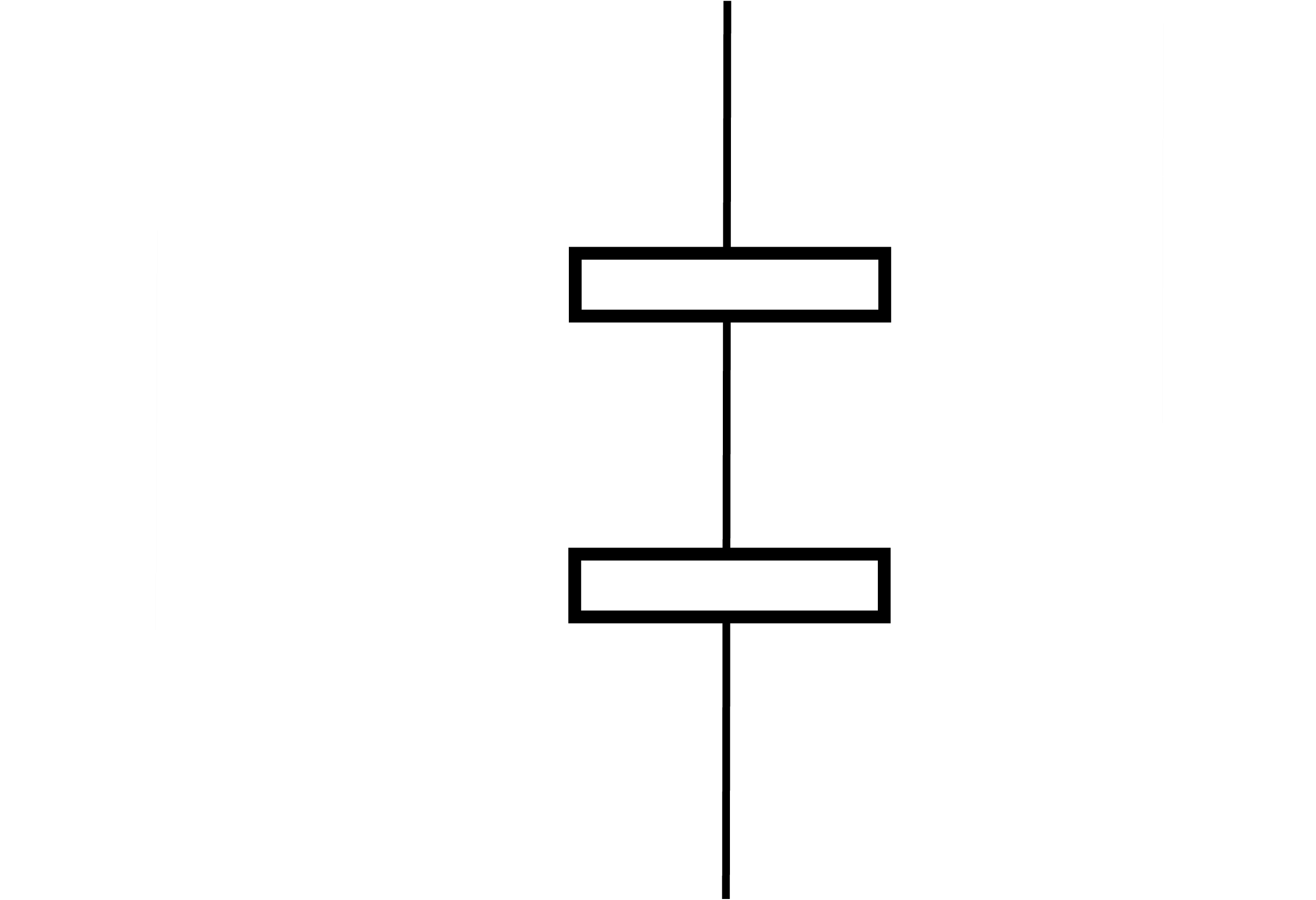}}
        \put(0,+80){\footnotesize{$n$}}    
   \end{minipage}
  = \hspace{5pt}
     \begin{minipage}[h]{0.1\linewidth}
        \vspace{0pt}
         \hspace{50pt}
        \scalebox{0.115}{\includegraphics{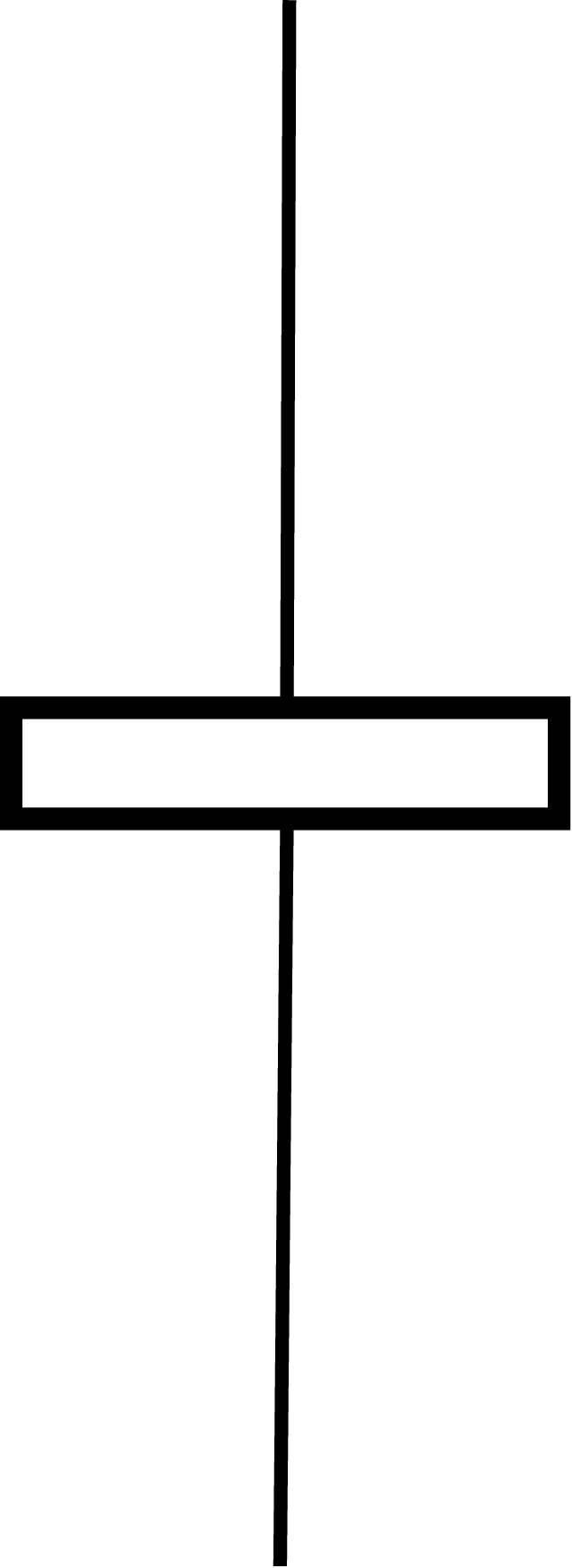}}
        \put(-60,80){\footnotesize{$n$}}
   \end{minipage}
    , (2) \hspace{22 mm}
    \begin{minipage}[h]{0.09\linewidth}
        \vspace{0pt}
        \scalebox{0.115}{\includegraphics{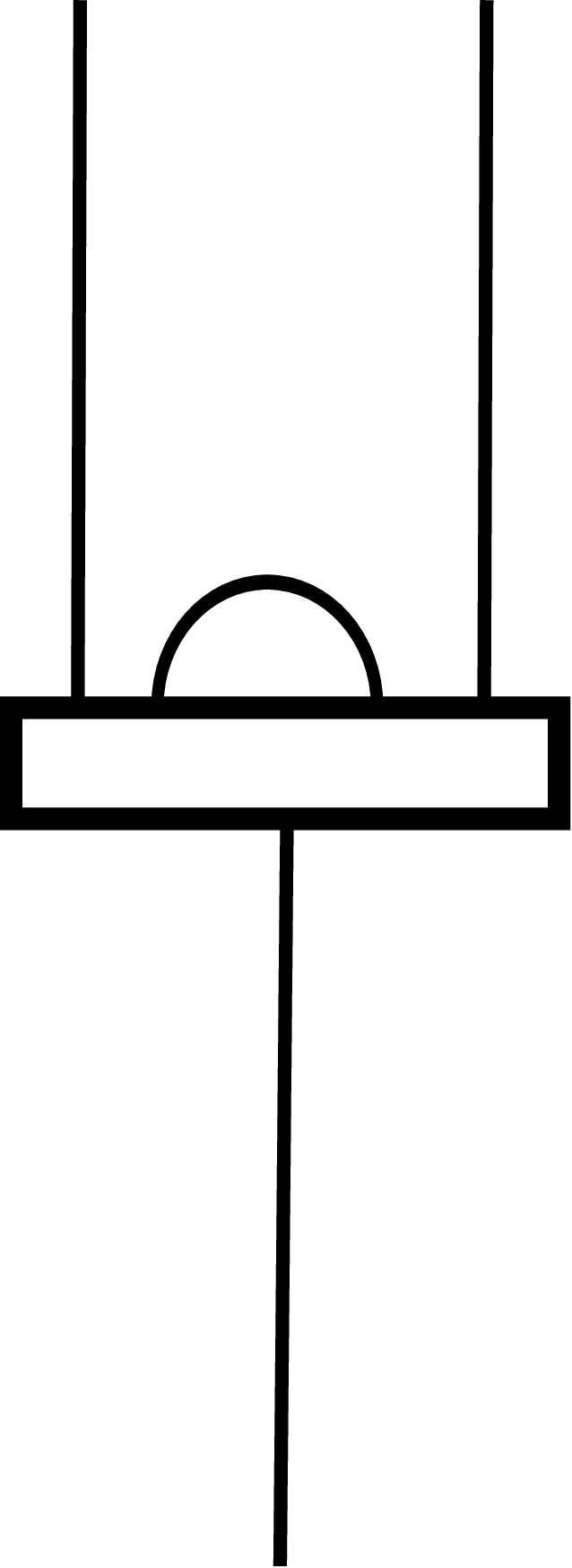}}
         \put(-70,+82){\footnotesize{$n-i-2$}}
         \put(-20,+64){\footnotesize{$1$}}
        \put(-2,+82){\footnotesize{$i$}}
        \put(-28,20){\footnotesize{$n$}}
   \end{minipage}
   =0.
     \end{align}
The second equation of \ref{properties8} holds for $1\leq i\leq n-1$ and it is usually called \textit{the annihilation axiom}. The $n^{th}$ Jones-Wenzl idempotent has an important recursive formula due to Wenzl \cite{Wenzl} and this formula is stated graphically as follows:   
\begin{align}
(1)\hspace{8 mm}\label{recursive}
  \begin{minipage}[h]{0.05\linewidth}
        \vspace{0pt}
        \scalebox{0.12}{\includegraphics{nth-jones-wenzl-projector}}
         \put(-20,+70){\footnotesize{$n$}}
   \end{minipage}
   =
  \begin{minipage}[h]{0.08\linewidth}
        \hspace{8pt}
        \scalebox{0.12}{\includegraphics{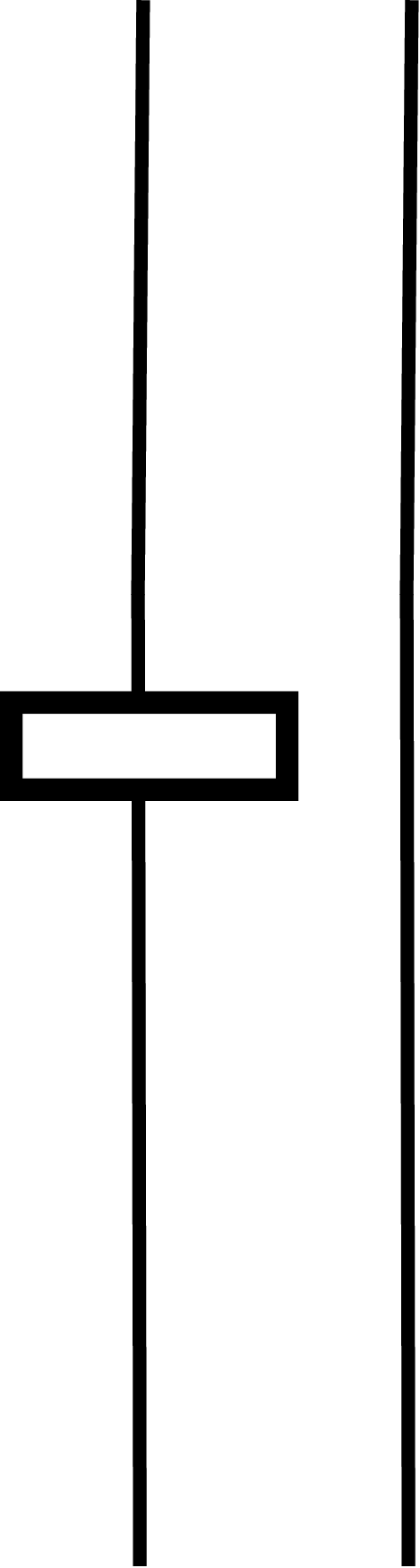}}
        \put(-42,+70){\footnotesize{$n-1$}}
        \put(-8,+70){\footnotesize{$1$}}
   \end{minipage}
   \hspace{9pt}
   -
 \Big( \frac{\Delta_{n-2}}{\Delta_{n-1}}\Big)
  \hspace{9pt}
  \begin{minipage}[h]{0.10\linewidth}
        \vspace{0pt}
        \scalebox{0.12}{\includegraphics{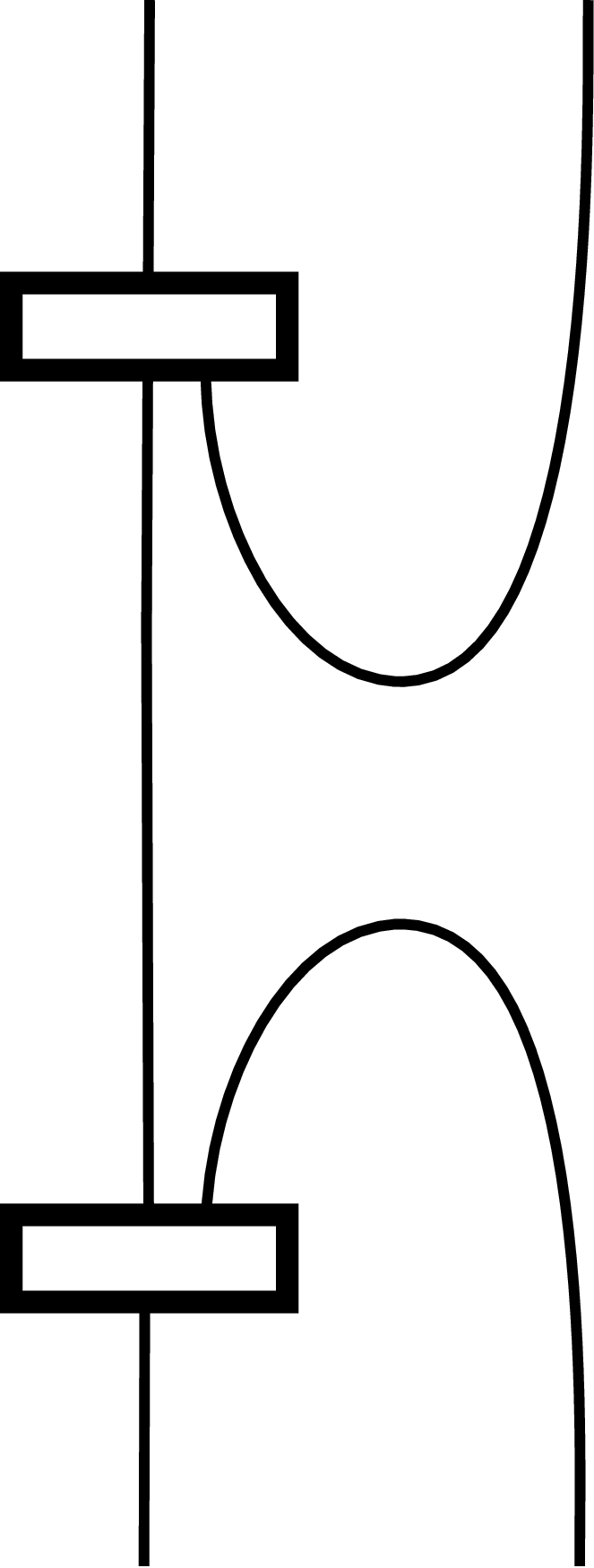}}
         \put(2,+85){\footnotesize{$1$}}
         \put(-52,+87){\footnotesize{$n-1$}}
         \put(-25,+47){\footnotesize{$n-2$}}
         \put(2,+10){\footnotesize{$1$}}
         \put(-52,+5){\footnotesize{$n-1$}}
   \end{minipage}
  , (2) \hspace{8 mm}
   \hspace{2 mm} \begin{minipage}[h]{0.05\linewidth}
        \vspace{0pt}
        \scalebox{0.12}{\includegraphics{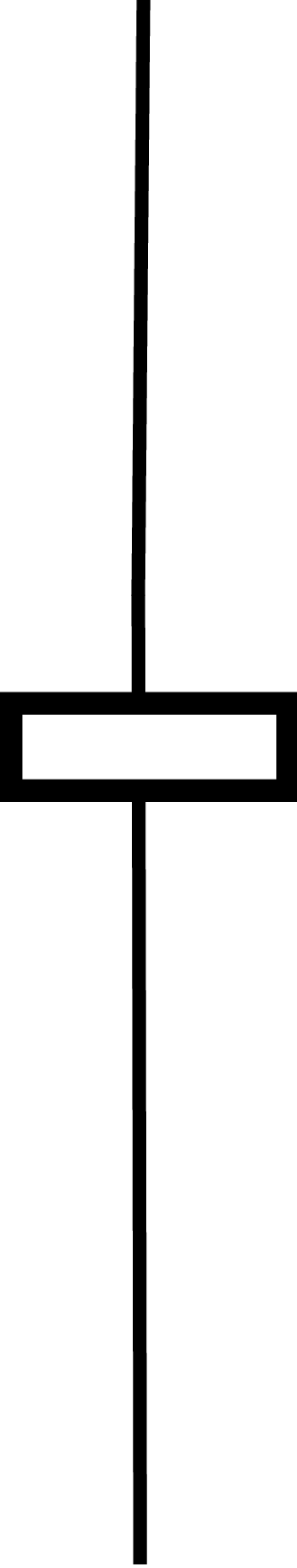}}
        \put(-20,+70){\footnotesize{$1$}}
   \end{minipage}
  =
  \begin{minipage}[h]{0.05\linewidth}
        \vspace{0pt}
        \scalebox{0.12}{\includegraphics{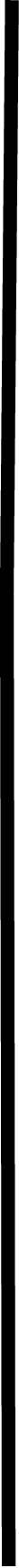}}
   \end{minipage}   
  \end{align}
  where 
\begin{equation*}
 \Delta_{n}=(-1)^{n}\frac{A^{2(n+1)}-A^{-2(n+1)}}{A^{2}-A^{-2}}.
\end{equation*}
We assume that $f^{(0)}$ is the empty tangle. A proof of Wenzl's formula can be found in \cite{Lickorish1} and \cite{Wenzl}. The defining properties of the Jones-Wenzl idempotent imply the following identities
\begin{eqnarray}
\label{properties}
\hspace{0 mm}
(1)\hspace{8 mm} \Delta_{n}=
  \begin{minipage}[h]{0.1\linewidth}
        \vspace{0pt}
        \scalebox{0.12}{\includegraphics{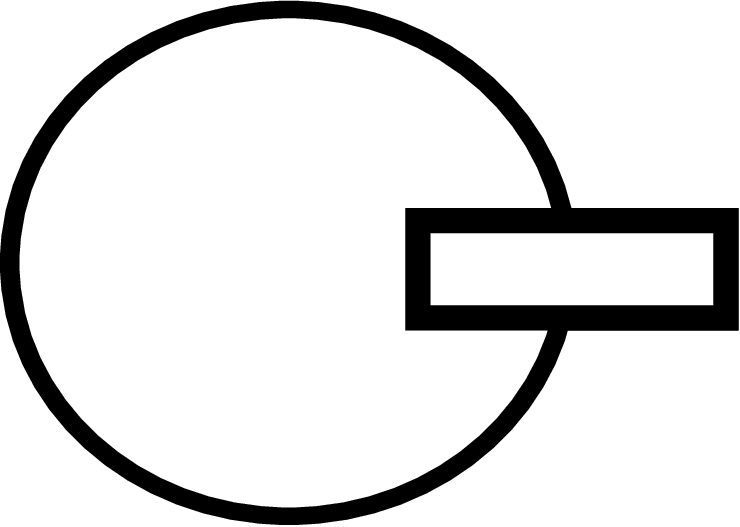}}
        \put(-29,+34){\footnotesize{$n$}}
   \end{minipage}
   , (2) \hspace{8 mm}
     \begin{minipage}[h]{0.08\linewidth}
        \vspace{0pt}
        \scalebox{0.115}{\includegraphics{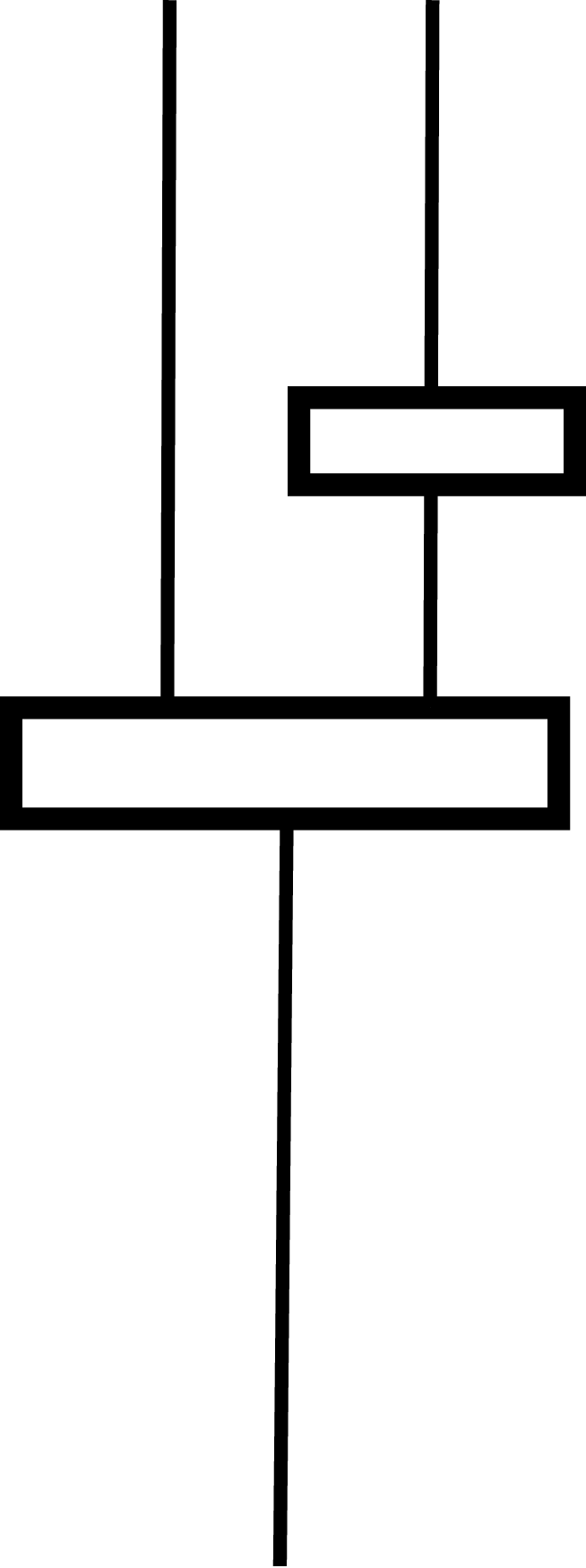}}
        \put(-34,+82){\footnotesize{$n$}}
        \put(-19,+82){\footnotesize{$m$}}
        \put(-46,20){\footnotesize{$m+n$}}
   \end{minipage}
  =
     \begin{minipage}[h]{0.09\linewidth}
        \vspace{0pt}
        \scalebox{0.115}{\includegraphics{idempotent2}}
        \put(-46,20){\footnotesize{$m+n$}}
   \end{minipage}
  \end{eqnarray}
  and
  \begin{eqnarray}
\label{properties2}
    (1)\hspace{8 mm} \begin{minipage}[h]{0.09\linewidth}
        \vspace{0pt}
        \scalebox{0.115}{\includegraphics{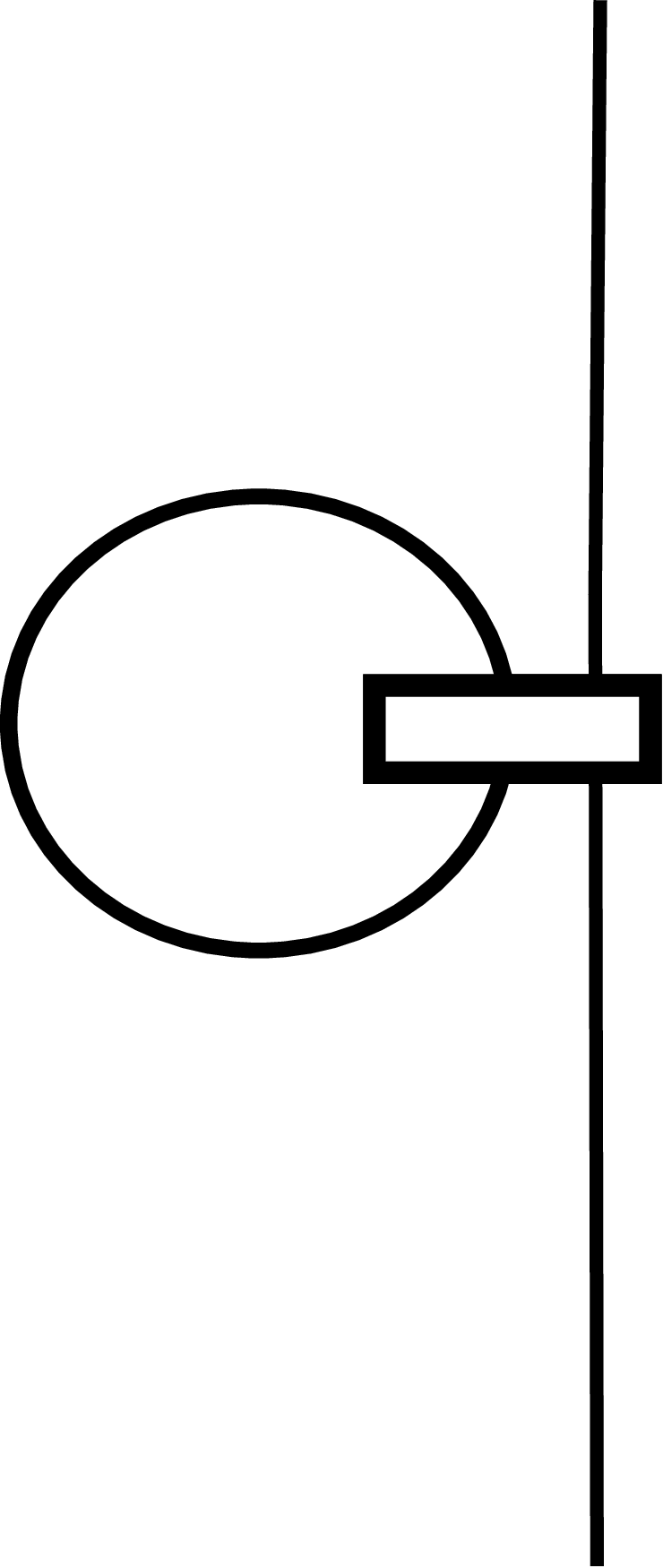}}
          \put(-13,+82){\footnotesize{$n$}}
        \put(-29,+70){\footnotesize{$m$}}
   \end{minipage}
   =\frac{\Delta_{m+n}}{\Delta_{n}}\hspace{1 mm}  
    \begin{minipage}[h]{0.06\linewidth}
        \vspace{0pt}
        \scalebox{0.115}{\includegraphics{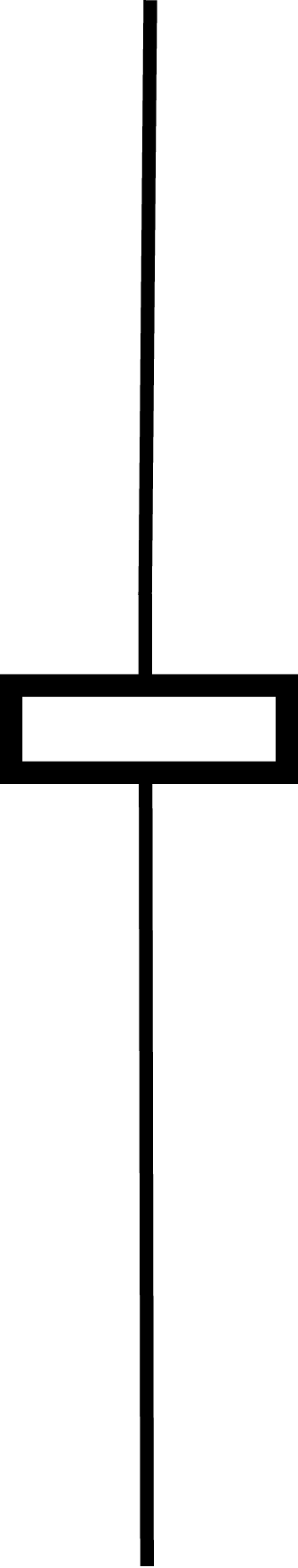}}
           \put(-19,+82){\footnotesize{$n$}}
   \end{minipage}
   , (2) \hspace{8 mm}
   \begin{minipage}[h]{0.08\linewidth}
        \vspace{0 pt}
        \scalebox{.15}{\includegraphics{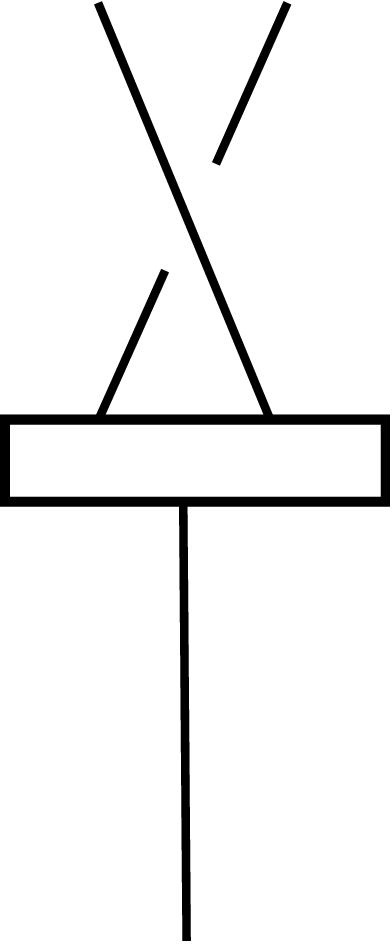}}
        \scriptsize{
        \put(-25,+72){$i$}
        \put(-5,+72){$j$}
        \put(-10,+10){$i+j$}
        }

   \end{minipage}
   =
     A^{-ij}
  \begin{minipage}[h]{0.1\linewidth}
        \vspace{0pt}
        \scalebox{.15}{\includegraphics{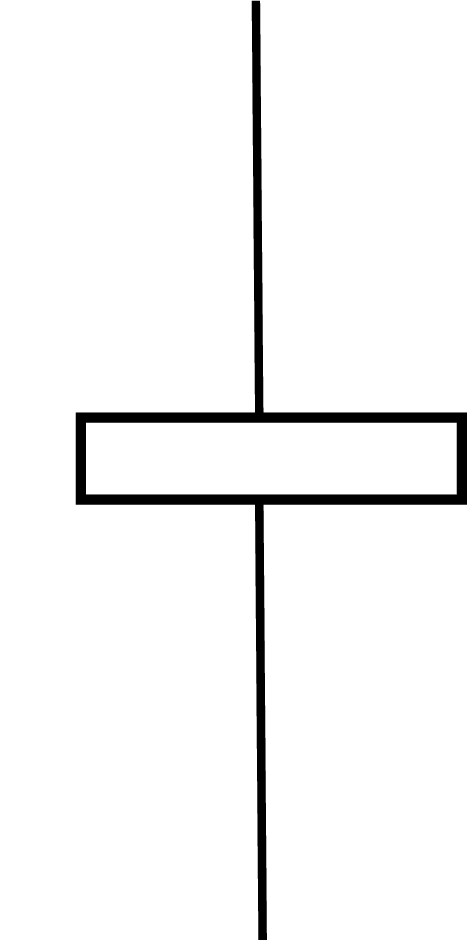}}
        \scriptsize{
          \put(-10,+10){$i+j$}
                  }
          \end{minipage}
   \end{eqnarray}
\subsection{The head and the tail of the colored Jones polynomial for alternating links}  Let $L$ be a zero-framed link in $S^3$. Recall that the unreduced colored Jones polynomial of $L$, denoted by $\tilde{J}_{n,L}(A)$, can be obtained by decorate every component of $L$, according to its framing, by the $n^{th}$ Jones-Wenzl idempotent and consider this decorated framed link as an element of $\mathcal{S}(S^3)$. We are only concerted with the coefficients of the colored Jones polynomial so for our purpose the framing of the link $L$ could be chosen arbitrary.\\

Following \cite{Cody2}, write $P_1(A)\doteq_n P_2(A)$ ,where $P_1(A)$, $P_2(A)$ are two Laurent series if and only if the first $n$ coefficients of $P_1(A)$ and $P_2(A)$ coincide up to a common sign change. The tail of the unreduced colored Jones polynomial of a link $L$, if it exist, is defined to be a series $T_L(A)$, that satisfies  $T_L(A)\doteq_{4n}\tilde{J}_{n,L}(A)$ for all $n$.
\section{The colored Kauffman skein relation}
\label{section3}
We start this section by proving the colored Kauffman skein relation in \ref{lemma1}. This relation is implicit in the work of Yamada in \cite{yamada}. The colored Kauffman skein relation will be used in the next section to understand the highest and the lowest coefficients of the colored Jones polynomial.\\

The following two Lemmas are basically due to Yamada \cite{yamada}. We include the proof here with modification for completeness.
\begin{lemma}
\label{lemma1}(The colored Kauffman skein relation)
Let $n\geq 0$. Then we have
\begin{eqnarray*}
   \begin{minipage}[h]{0.15\linewidth}
        \vspace{0 pt}
        \scalebox{.5}{\includegraphics{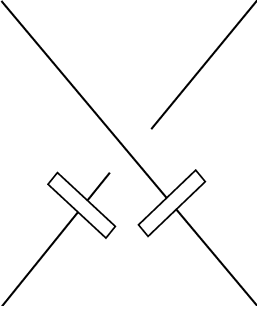}}
        \scriptsize{
         \put(-75,+82){$n+1$}
          \put(-8,+82){$n+1$}}

   \end{minipage}
   =
     A^{2n+1}
  \begin{minipage}[h]{0.20\linewidth}
        \vspace{10pt}
        \scalebox{.5}{\includegraphics{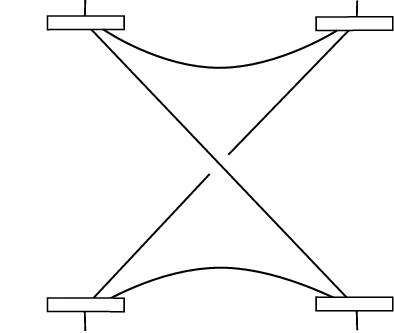}}
        \scriptsize{
        \put(-85,+82){$n+1$}
          \put(-20,+82){$n+1$}
          \put(-60,48){$n$}
          \put(-30,48){$n$}
         \put(-45,+56){$1$}
           \put(-45,+18){$1$}
           }
          \end{minipage}
   +
  A^{-(2n+1)}
  \begin{minipage}[h]{0.20\linewidth}
        \vspace{10pt}
        \scalebox{.5}{\includegraphics{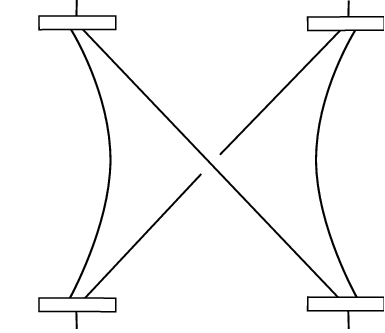}}
        \scriptsize{
        \put(-85,+82){$n+1$}
          \put(-20,+82){$n+1$}
          \put(-60,60){$n$}
          \put(-32,60){$n$}
            \put(-25,+42){$1$}
           \put(-65,+42){$1$}
          
           }
          \end{minipage}
\end{eqnarray*}
\end{lemma}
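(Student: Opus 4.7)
The plan is to peel off a single strand from each of the two $(n+1)$-cables using Wenzl's recursion (\ref{recursive}), apply the ordinary Kauffman bracket skein relation to the resulting auxiliary single-strand crossings, and invoke the annihilation axiom for the Jones--Wenzl projectors to kill the extraneous terms.

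Concretely, I would first apply Wenzl's recursion to both $f^{(n+1)}$ projectors sitting on the two cables, writing in each case
\[
f^{(n+1)} \;=\; \bigl(f^{(n)}\otimes \mathrm{id}\bigr) \;-\; \frac{\Delta_{n-1}}{\Delta_n}\,(\text{hook}).
\]
In the main contribution, where both projectors are replaced by the $(f^{(n)}\otimes\mathrm{id})$ piece, each $(n+1)$-cable is reorganized as an $n$-cable running parallel to a single strand, and the $(n+1)\times(n+1)$-crossing factors in a braid-theoretic sense into an outer $n\times n$-crossing between the two $n$-cables (which remains unresolved and becomes the $n$-cable portion of both right-hand diagrams), together with $2n+1$ auxiliary single-strand sub-crossings: $n$ between the left peeled-off strand and the right $n$-cable, $n$ between the right peeled-off strand and the left $n$-cable, and the central $1\times 1$ crossing of the two peeled-off strands. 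Applying the Kauffman skein relation to each of these $2n+1$ auxiliary sub-crossings produces $2^{2n+1}$ states. In every state other than the ``all-$A$'' and ``all-$B$'' states, at least one $B$-smoothing creates a turnback trapped inside the range of some $f^{(n)}$ or $f^{(n+1)}$, and the annihilation axiom (the second identity in (\ref{properties8})) kills it. The ``all-$A$'' state survives with coefficient $A^{2n+1}$ and matches the first right-hand diagram, while the ``all-$B$'' state survives with coefficient $A^{-(2n+1)}$ and matches the second.

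It remains to handle the three other contributions of the Wenzl expansion, each of which involves at least one hook. In every such term the hook introduces a cup-cap inside a Jones--Wenzl projector, and a combination of the annihilation axiom together with the twist identity (the second identity in (\ref{properties2})), which absorbs surviving half-twists as $A^{-ij}$-factors, should force the contribution either to vanish outright or to cancel against stray pieces of the main expansion. The main obstacle is precisely this bookkeeping: verifying state-by-state that every non-extreme auxiliary smoothing creates a projector-trapped turnback, and that every Wenzl hook contribution cancels cleanly, so that only the two desired right-hand diagrams survive with the claimed coefficients $A^{\pm(2n+1)}$.
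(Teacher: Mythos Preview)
Your outline could in principle be carried through, but it is far more complicated than necessary, and the steps you flag as ``the main obstacle'' are exactly what the paper's short proof sidesteps.

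The paper does not use Wenzl's recursion at all. It simply separates one strand from each $(n+1)$-cable while leaving the $f^{(n+1)}$ projectors intact, applies the ordinary Kauffman relation to the \emph{single} elementary crossing between those two strands, and then, in each of the two resulting terms, invokes the twist identity (property (2) of (\ref{properties2})) twice---once for each of the two remaining $1\times n$ crossings, each of which feeds directly into an $f^{(n+1)}$ and is therefore absorbed at the cost of a factor $A^{\pm n}$. Combining $A^{\pm 1}\cdot A^{\pm n}\cdot A^{\pm n}=A^{\pm(2n+1)}$ finishes the argument in two lines.

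Your plan diverges in two ways, and both create work rather than save it. First, expanding the $f^{(n+1)}$'s via Wenzl is counterproductive: it replaces them by $f^{(n)}$'s plus hooks, which no longer match the $f^{(n+1)}$'s appearing on the right-hand side of the identity, and forces you into the hook-cancellation bookkeeping you describe. Second, resolving all $2n+1$ auxiliary crossings into $2^{2n+1}$ states and arguing that all but the two extreme ones die by the annihilation axiom is, in effect, a state-by-state reproof of the twist identity (\ref{properties2})(2) that you already have in hand. You invoke that identity only at the end, for the hook terms; if instead you apply it directly to the two $1\times n$ blocks in the main picture, the $2^{2n+1}$-state expansion, the Wenzl recursion, and the hook analysis all become unnecessary.
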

\begin{proof}
Applying the Kauffman relation we obtain
\begin{eqnarray*}
   \begin{minipage}[h]{0.15\linewidth}
        \vspace{0 pt}
        \scalebox{.5}{\includegraphics{colored_crossing.eps}}
        \scriptsize{
         \put(-75,+78){$n+1$}
          \put(-8,+78){$n+1$}}

   \end{minipage}
   &=&
     A
  \begin{minipage}[h]{0.18\linewidth}
        \vspace{5pt}
        \scalebox{.5}{\includegraphics{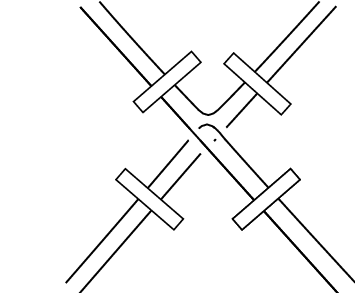}}
        \scriptsize{
        \put(-85,+78){$n+1$}
          \put(-20,+78){$n+1$}
          \put(-24,+64){$1$}
           \put(-50,+64){$1$}
            \put(-24,+10){$n$}
           \put(-50,+10){$n$}
             \put(-1,+10){$1$}
           \put(-70,+10){$1$}
            \put(-9,+57){$n$}
           \put(-65,+57){$n$}
           }
          \end{minipage}
   +
  A^{-1}
  \begin{minipage}[h]{0.18\linewidth}
        \vspace{0pt}
        \scalebox{.5}{\includegraphics{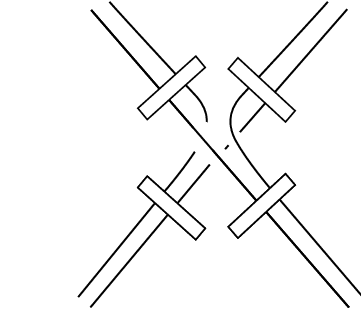}}
        \scriptsize{
        \put(-85,+78){$n+1$}
          \put(-20,+78){$n+1$}
          \put(-24,+64){$1$}
           \put(-50,+64){$1$}
            \put(-24,+10){$n$}
           \put(-50,+10){$n$}
             \put(-1,+10){$1$}
           \put(-70,+10){$1$}
            \put(-9,+57){$n$}
           \put(-65,+57){$n$}
           }
          \end{minipage}\\&=&    A
  \begin{minipage}[h]{0.18\linewidth}
        \vspace{5pt}
        \scalebox{.5}{\includegraphics{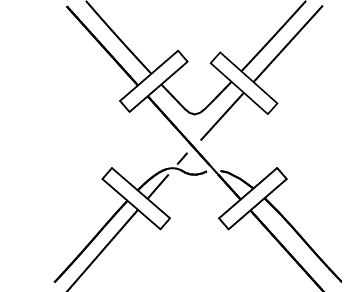}}
        \scriptsize{
        \put(-85,+75){$n+1$}
          \put(-20,+75){$n+1$}
          \put(-24,+64){$1$}
           \put(-50,+64){$1$}
            \put(-24,+10){$n$}
           \put(-50,+10){$n$}
             \put(-1,+10){$1$}
           \put(-70,+10){$1$}
            \put(-9,+57){$n$}
           \put(-65,+57){$n$}
           }
          \end{minipage}
   +
  A^{-1}
  \begin{minipage}[h]{0.18\linewidth}
        \vspace{5pt}
        \scalebox{.5}{\includegraphics{smoothing_A}}
        \scriptsize{
        \put(-85,+75){$n+1$}
          \put(-20,+75){$n+1$}
           \put(-24,+64){$1$}
           \put(-50,+64){$1$}
            \put(-24,+10){$n$}
           \put(-50,+10){$n$}
             \put(-1,+10){$1$}
           \put(-70,+10){$1$}
            \put(-9,+57){$n$}
           \put(-65,+57){$n$}
           }
          \end{minipage}      
\end{eqnarray*}
Using property (2) in \ref{properties2} we obtain the result.
\end{proof}
Note that if we special $n$ to $1$ in the previous Lemma we obtain the Kauffman skein relation.
\begin{lemma}
\label{lemma2}
Let $n\geq 0$. Then we have
\begin{eqnarray*}
   \begin{minipage}[h]{0.15\linewidth}
        \vspace{0 pt}
        \scalebox{.4}{\includegraphics{colored_crossing.eps}}
        \scriptsize{
         \put(-55,+52){$n$}
          \put(-1,+52){$n$}}

   \end{minipage}
   =
     \displaystyle\sum\limits_{k=0}^{n}C_{n,k}
  \begin{minipage}[h]{0.15\linewidth}
        \vspace{0pt}
        \scalebox{.4}{\includegraphics{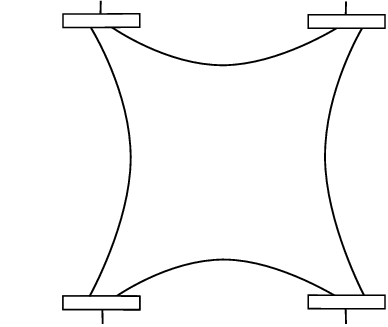}}
        \scriptsize{
        \put(-60,+64){$n$}
          \put(-5,+64){$n$}
          \put(-58,30){$k$}
          \put(-7,30){$k$}
         \put(-43,+56){$n-k$}
           \put(-43,+16){$n-k$}
           }
          \end{minipage}
  \end{eqnarray*}
  Where
  \begin{equation*}
  C_{n,k}=A^{n(n-2k)}{n \brack k}_{A}.
  \end{equation*}
  
\end{lemma}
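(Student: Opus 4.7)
The plan is to prove the formula by induction on $n$, using the colored Kauffman skein relation of Lemma~\ref{lemma1} as the engine of the inductive step. The base case $n=1$ is nothing but the ordinary Kauffman skein relation, since $C_{1,0}=A\cdot{1\brack 0}_{A}=A$ and $C_{1,1}=A^{-1}\cdot{1\brack 1}_{A}=A^{-1}$ recover the standard coefficients of the $A$- and $B$-smoothings respectively.

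For the inductive step, assume the identity holds at color $n$ and consider an $(n+1,n+1)$-colored crossing. I first apply Lemma~\ref{lemma1} to split this crossing into $A^{2n+1}$ times one diagram and $A^{-(2n+1)}$ times another, where in each case the two color-$(n+1)$ bundles have been decomposed into a color-$n$ sub-bundle plus a color-$1$ strand, with the two color-$1$ strands smoothed in one of the two possible ways. Each resulting diagram contains an $(n,n)$-colored crossing, so the inductive hypothesis expands it as $\sum_{k=0}^{n}C_{n,k}$ times an $n$-level basis diagram. The following step is diagrammatic: using relation~(2) of~\ref{properties} I fuse the leftover color-$1$ strand back into the adjacent color-$n$ Jones-Wenzl projector, producing a color-$(n+1)$ projector and thereby identifying each summand with an $(n+1)$-level basis diagram of the shape shown in the statement of the lemma, whose internal color is either $k$ or $k+1$ depending on which bundle the extra strand joins.

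Re-indexing both contributions by a common parameter $k\in\{0,\dots,n+1\}$, the coefficient of the $k$-th basis diagram at level $n+1$ becomes a linear combination of $A^{2n+1}A^{n(n-2k)}{n\brack k}_{A}$ and $A^{-(2n+1)}A^{n(n-2(k-1))}{n\brack k-1}_{A}$, possibly modified by additional $A^{-ij}$ factors supplied by property~(2) of~\ref{properties2} whenever parallel bundles need to be straightened. Factoring out $A^{(n+1)(n+1-2k)}$ from this expression reduces the remaining bracket to precisely the Pascal-type recursion for the $A$-Gaussian binomial coefficient, so the combined coefficient equals $C_{n+1,k}$, as required.

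The main obstacle is this final bookkeeping: the $A$-exponents coming from three distinct sources, namely the $A^{\pm(2n+1)}$ in Lemma~\ref{lemma1}, the prefactor $A^{n(n-2k)}$ in the inductive $C_{n,k}$, and the $A^{-ij}$ generated by strand fusion, must conspire to produce exactly the powers appearing in the Pascal identity for $A$-binomials, and this verification must be carried out with care. The boundary cases $k=0$ and $k=n+1$, where only one of the two inductive terms contributes, are handled under the standard convention ${n\brack -1}_{A}={n\brack n+1}_{A}=0$.
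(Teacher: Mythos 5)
Your proposal is correct and takes essentially the same route as the paper: induction on the color $n$, with the colored Kauffman skein relation of Lemma~\ref{lemma1} supplying the inductive step and the coefficient identification reducing to the recursion $C_{n,k}=A^{2n-1}C_{n-1,k}+A^{-2n+1}C_{n-1,k-1}$, i.e.\ the Pascal rule for the $A$-Gaussian binomial. The paper organizes the same computation by writing the expansion with undetermined coefficients $C^{\prime}_{n,k}$ and invoking linear independence of the basis diagrams to extract that recursion, which is exactly your ``collect the coefficient of each level-$(n+1)$ basis diagram'' step.
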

\begin{proof}
Lemma \ref{lemma1} implies that
\begin{eqnarray}
\label{GOO}
   \begin{minipage}[h]{0.15\linewidth}
        \vspace{0 pt}
        \scalebox{.4}{\includegraphics{colored_crossing.eps}}
        \scriptsize{
         \put(-55,+52){$n$}
          \put(-1,+52){$n$}}

   \end{minipage}
   =
     \displaystyle\sum\limits_{k=0}^{n}C^{\prime}_{n,k}
  \begin{minipage}[h]{0.15\linewidth}
        \vspace{0pt}
        \scalebox{.4}{\includegraphics{goodbasis}}
        \scriptsize{
        \put(-60,+64){$n$}
          \put(-5,+64){$n$}
          \put(-58,30){$k$}
          \put(-7,30){$k$}
         \put(-43,+56){$n-k$}
           \put(-43,+16){$n-k$}
           }
          \end{minipage}
  \end{eqnarray}
where $C^{\prime}_{n,k}$ is a polynomial with integral coefficients in A. Let us prove by induction on $n$ that we have
\begin{equation}
\label{induction}
C^{\prime}_{n,k}=C_{n,k}.
\end{equation}
For $n=1$ relation (\ref{induction}) holds since this is just the Kauffman skein relation. Applying the identity (\ref{GOO}) on the colored each term of the colored Kauffman skein relation, we obtain :
\begin{eqnarray*}
    \displaystyle\sum\limits_{k=0}^{n}C^{\prime}_{n,k}
  \begin{minipage}[h]{0.15\linewidth}
        \vspace{0pt}
        \scalebox{.4}{\includegraphics{goodbasis}}
        \scriptsize{
        \put(-60,+64){$n$}
          \put(-5,+64){$n$}
          \put(-48,30){$k$}
          \put(-7,30){$k$}
         \put(-43,+56){$n-k$}
           \put(-43,+16){$n-k$}
           }
          \end{minipage}
  &=&
   A^{2n-1}\displaystyle\sum\limits_{k=0}^{n-1}C^{\prime}_{n-1,k} \begin{minipage}[h]{0.17\linewidth}
        \vspace{0pt}
        \scalebox{.4}{\includegraphics{goodbasis}}
        \scriptsize{
        \put(-60,+64){$n$}
          \put(-5,+64){$n$}
          \put(-48,30){$k$}
          \put(-7,30){$k$}
         \put(-43,+56){$n-k$}
           \put(-43,+16){$n-k$}
           }
          \end{minipage}+ A^{-2n+1}\displaystyle\sum\limits_{k=0}^{n-1}C^{\prime}_{n-1,k} \begin{minipage}[h]{0.17\linewidth}
        \vspace{0pt}
        \scalebox{.4}{\includegraphics{goodbasis}}
        \scriptsize{
        \put(-60,+64){$n$}
          \put(-5,+64){$n$}
          \put(-48,30){$k+1$}
          \put(-7,30){$k+1$}
           }
          \end{minipage}\\&=&A^{2n-1}\displaystyle\sum\limits_{k=0}^{n-1}C^{\prime}_{n-1,k} \begin{minipage}[h]{0.16\linewidth}
        \vspace{0pt}
        \scalebox{.4}{\includegraphics{goodbasis}}
        \scriptsize{
        \put(-60,+64){$n$}
          \put(-5,+64){$n$}
          \put(-48,30){$k$}
          \put(-7,30){$k$}
         \put(-43,+56){$n-k$}
           \put(-43,+16){$n-k$}
           }
          \end{minipage}+ A^{-2n+1}\displaystyle\sum\limits_{k=1}^{n-1}C^{\prime}_{n-1,k-1} \begin{minipage}[h]{0.15\linewidth}
        \vspace{0pt}
        \scalebox{.4}{\includegraphics{goodbasis}}
        \scriptsize{
        \put(-60,+64){$n$}
          \put(-5,+64){$n$}
          \put(-48,30){$k$}
          \put(-7,30){$k$}
           }
          \end{minipage}
  \end{eqnarray*}
The skein elements  $\begin{minipage}[h]{0.13\linewidth}
        \vspace{0pt}
        \scalebox{.3}{\includegraphics{goodbasis}}
        \scriptsize{
        \put(-50,+48){$n$}
          \put(-5,+48){$n$}
          \put(-45,20){$k$}
          \put(-7,20){$k$}
         \put(-34,+43){$n-k$}
           \put(-34,+14){$n-k$}
           }
          \end{minipage}$, where $0\leq k \leq n$, are linearly independent, see for instance \cite{Hajij}, and hence 
\begin{equation}
\label{rec1}
C^{\prime}_{n,k}=A^{2n-1}C^{\prime}_{n-1,k}+A^{-2n+1}C^{\prime}_{n-1,k-1}.
\end{equation}
However
\begin{eqnarray*}
C_{n,k}&=&A^{n(n-2k)}(A^{2k}{n-1 \brack k}_{A}+A^{2k-2n}{n-1 \brack k-1}_{A})
\\&=&A^{n^2-2nk+2k}{n-1 \brack k}_{A}+A^{n^2-2nk-2n+2k}{n-1 \brack k-1}_{A}
\end{eqnarray*}
Hence
\begin{equation}
\label{rec2}
C_{n,k}=A^{2n-1}C_{n-1,k}+A^{-2n+1}C_{n-1,k-1}.
\end{equation}
Relations (\ref{rec1}) and (\ref{rec2}) and the induction hypothesis yield the result.
\end{proof}
Motivated by Lemma \ref{lemma1} we define \textit{the $n$-colored states} for a link diagram $D$ for every positive integer $n$. Suppose that the link diagram has $k$ crossings. Label the crossings of the link diagram $D$ by $1$,..,$k$. An $n$-colored state $s^{(n)}$ for a link diagram $D$ is a function $s^{(n)}:\{1,..,k\}\rightarrow\{-1,+1\}$. If the color $n$ is clear from the context, we will drop $n$ from the notation of a colored state.
\begin{figure}[H]
  \centering
   {\includegraphics[scale=0.5]{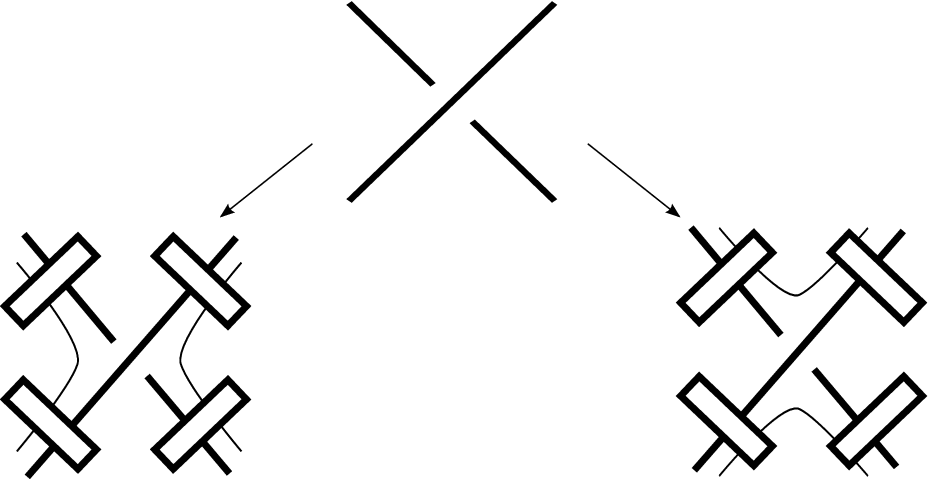}
   \small{
    \put(-165,78){$A$}
          \put(-70,78){$B$}
          \put(-70,26){$n-1$}
          \put(-13,26){$n-1$}
          \put(-35,52){$1$}
           \put(-35,6){$1$}
 		\put(-215,25){$1$}
          \put(-178,25){$1$}
          \put(-203,63){$n-1$}
           \put(-203,-7){$n-1$}           
           }
    \caption{The $n$-colored $A$ and $B$ smoothings}
  \label{smoothings}}
\end{figure}
Given a link diagram $D$ and a colored state $s$ for the diagram $D$. We construct a skein element $\Upsilon^{(n)}_D(s)$ obtained from $D$ by replacing each crossing labeled $+1$ by an $n$-colored $A$-smoothing and each crossing labeled $-1$ by an $n$-colored $B$-smoothing, see Figure \ref{smoothings}. Two particular skein elements obtained in this way are important to us. The skein element obtained by replacing each crossing by the $n$-colored $A$-smoothing will be called the \textit{$n$-colored $A$-state} and denoted by $\Upsilon^{(n)}_D(s_+)$, where $s_+$ denotes the colored state for which $s_+(i)=+1$ for all $i$ in $\{1,..,m\}$. The $n$-colored $B$-state is defined similarly. See Figure \ref{colored state} for an example.
\begin{figure}[H]
  \centering
   {\includegraphics[scale=0.6]{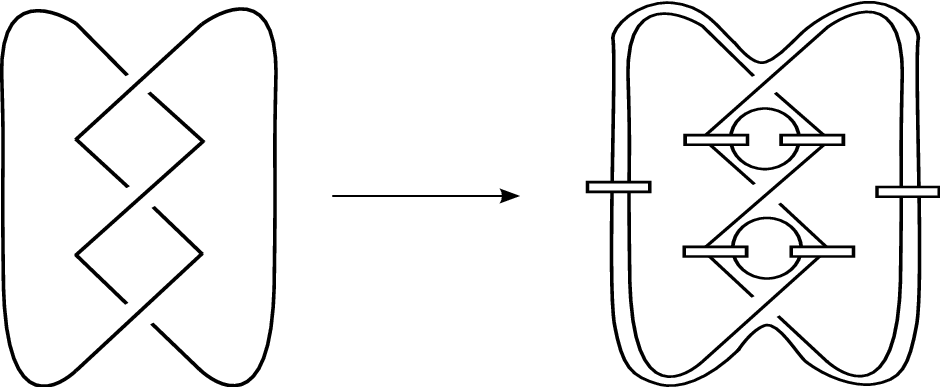}
    \footnotesize{
    \put(-87,26){$n-1$}
          \put(-35,26){$n-1$}
          \put(-52,40){$1$}
          \put(-52,69){$1$}
          \put(-52,98){$1$}
           \put(-35,-4){$1$}
           }
       \caption{$n$-colored $B$-state}
  \label{colored state}}
\end{figure}
Following Armond \cite{Cody2}, we now define what we mean by an adequate skein element in $\mathcal{S}(S^2)$. Consider a skein element $S$ in $\mathcal{S}(S^2)$ consists of arcs connecting Jones-Wenzl idempotents of various colors. Construct the diagram $\bar{S}$ from $S$ by replacing each $i^{th}$ Jones-Wenzl idempotent in $S$ by the identity in $TL_i$. If $S$ contains no crossings then we say that $S$ is adequate if $\bar{S}$ consists of circles each one of them passes at most once from the regions where we had the boxes of the idempotents in $S$. In other words, a crossingless skein element $S$ is adequate if each circle in $S$ passes at most once from the same idempotent. See Figure \ref{cody_1} for a local picture of an adequate skein element.
 \begin{figure}[H]
  \centering
   {\includegraphics[scale=0.1]{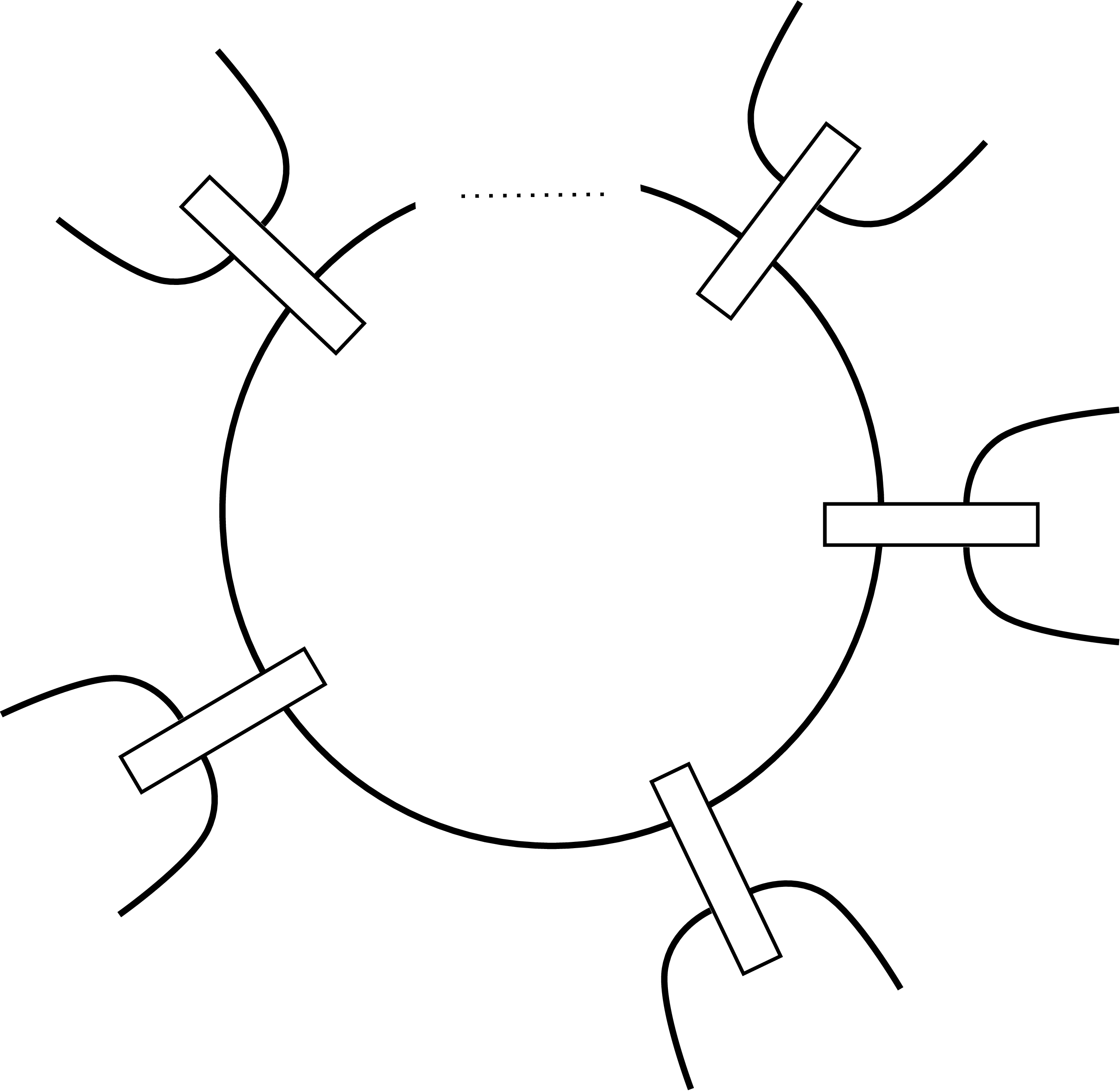}
    \put(-60,80){$n$}
          \put(-15,79){$k$}
    \caption{A local picture an adequate skein element}
    \label{cody_1}
 }
\end{figure} 
If $f$ is in $\mathcal{S}(S^2)$ then $d(f)$ will denote the minimum degree of $f$ expressed as a
Laurent series in $A$. Furthermore, denote $D(S):=d(\bar{S})$. The following lemma is due to Armond \cite{Cody2}.
\begin{lemma}
\label{codys}
If $S \in \mathcal{S}(S^2)$ is expressed as a single diagram containing the Jones-Wenzl idempotent, then $d(S)\geq D(S)$. If the diagram for $S$ is an adequate skein diagram, then $d(S) = D(S)$.
\end{lemma}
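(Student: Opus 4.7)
The plan is to induct on the total color $N(S)=\sum_i n_i$, where the sum runs over the Jones--Wenzl idempotents $f^{(n_i)}$ appearing in $S$. The base case $N(S)=0$ is immediate since there $S=\bar S$. For the inductive step, pick a Jones--Wenzl idempotent $f^{(n)}$ with $n\ge 2$ in $S$ and apply Wenzl's recursion \ref{recursive} to decompose
\[
S \;=\; S_1 \;-\; \frac{\Delta_{n-2}}{\Delta_{n-1}}\, S_2,
\]
where $S_1$ has the chosen $f^{(n)}$ replaced by $f^{(n-1)}\otimes \mathbf{1}$ and $S_2$ has it replaced by the cup--cap tangle arising from the second Wenzl term. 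Both $S_1$ and $S_2$ have strictly smaller total color than $S$. The remaining case $n=1$ is handled separately: by \ref{recursive}(2), $f^{(1)}$ is the identity on a single strand and may simply be erased, strictly decreasing $N(S)$.

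For the general inequality, note that $\bar S_1=\bar S$, so $D(S_1)=D(S)$ and the inductive hypothesis gives $d(S_1)\ge D(S)$. A direct computation with the explicit formula for $\Delta_n$ yields $d\bigl(\Delta_{n-2}/\Delta_{n-1}\bigr)=+2$. The diagram $\bar S_2$ differs from $\bar S$ by a local cap--cup insertion at the site of the original idempotent, which either splits one circle of $\bar S$ into two or merges two circles into one; either way the circle count changes by exactly one, so $D(S_2)\ge D(S)-2$. Combining,
\[
d\!\left(\tfrac{\Delta_{n-2}}{\Delta_{n-1}}\,S_2\right)\ \ge\ 2+(D(S)-2)\ =\ D(S),
\]
and the two estimates together give $d(S)\ge D(S)$.

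For the equality statement in the adequate case, I would first verify that adequacy is inherited by $S_1$: the region occupied by $f^{(n-1)}$ in $S_1$ sits inside the region occupied by $f^{(n)}$ in $S$, so any circle of $\bar S_1=\bar S$ passing twice through the smaller region would already pass twice through the larger, contradicting adequacy of $S$. Hence the inductive equality applies to $S_1$, giving $d(S_1)=D(S_1)=D(S)$. On the other hand, adequacy forces the two strands at the site of $f^{(n)}$ that are sealed off by the Wenzl cup--cap to belong to distinct circles of $\bar S$, so the insertion strictly decreases the circle count; thus $D(S_2)=D(S)+2$ and the second contribution has minimum degree at least $D(S)+4$. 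The minimum is therefore attained only by $S_1$, yielding $d(S)=D(S)$.

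The main obstacle is the careful combinatorial verification of the two adequacy-dependent claims: the descent of adequacy from $S$ to $S_1$, and the identification of the two strands sealed by the Wenzl cup--cap as lying on different circles of $\bar S$. Both require a patient unwinding of the definition of adequacy in terms of the regions once occupied by the idempotent boxes, together with a local analysis of the tangle produced by Wenzl's formula; everything else is a clean bookkeeping of degrees via the formulas for $\Delta_n$.
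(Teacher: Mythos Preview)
The paper does not supply its own proof of this lemma; it is quoted from Armond~\cite{Cody2}. So there is no in-paper argument to compare against, and the relevant question is simply whether your proposal stands on its own.

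Your overall strategy---expand one idempotent via Wenzl's recursion~\ref{recursive}, control the minimum degree of each piece, and in the adequate case show the second Wenzl term contributes strictly above $D(S)$---is the right one and matches the standard approach. The degree bookkeeping is correct: $d(\Delta_{n-2}/\Delta_{n-1})=2$, the cup--cap changes the circle count of $\bar S$ by exactly one so $D(S_2)=D(S)\pm 2$, and adequacy forces the two outermost strands at the chosen box to lie on distinct circles of $\bar S$, giving $D(S_2)=D(S)+2$ in that case. The inheritance of adequacy by $S_1$ is also fine, since $\bar S_1=\bar S$ and the $f^{(n-1)}$ region sits inside the old $f^{(n)}$ region.

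There is, however, a genuine gap in the induction. You claim that both $S_1$ and $S_2$ have strictly smaller total color $N=\sum_i n_i$ than $S$. This is true for $S_1$ (one $f^{(n)}$ becomes one $f^{(n-1)}$, so $N$ drops by $1$), but it is \emph{false} for $S_2$: the second Wenzl term contains \emph{two} copies of $f^{(n-1)}$ (as is visible in the paper's own picture of~\ref{recursive}), so
\[
N(S_2)\;=\;N(S)-n+2(n-1)\;=\;N(S)+(n-2)\;\ge\;N(S)\qquad(n\ge 2).
\]
Thus the quantity $\sum_i n_i$ does not decrease, and the induction as written does not terminate.

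The fix is to change the complexity measure. A clean choice is to induct lexicographically on the pair $(m,\,k)$, where $m=\max_i n_i$ and $k$ is the number of idempotents of color $m$; always apply Wenzl's recursion to an idempotent of maximal color. Then $S_1$ replaces one $f^{(m)}$ by one $f^{(m-1)}$ and $S_2$ replaces one $f^{(m)}$ by two $f^{(m-1)}$'s, so in both cases either $m$ drops or $k$ drops with $m$ fixed, and the induction goes through. With this correction the remainder of your argument is valid.
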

\section{The main theorem}
The colored Kauffman skein relation provides a natural framework to understand the highest and the lowest coefficients of the colored Jones polynomial. In this section we will use this relation to prove that the the highest (the lowest respectively) $4n$ coefficients of the $n^{th}$ unreduced colored Jones polynomial agree up to a sign with the $n$-colored $A$-state (the $n$-colored $B$-state respectively). We use this result to prove existence of the the tail of the colored Jones polynomial.
\begin{theorem}
\label{thm1}
 Let $L$ be an alternating link diagram. Then
\begin{equation*}
\tilde{J}_{n,L}\doteq_{4n}\Upsilon_L^{(n)}(s_-)
\end{equation*}
\end{theorem}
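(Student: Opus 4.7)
The plan is to iterate the colored Kauffman skein relation of Lemma~\ref{lemma1} at every crossing of the alternating diagram $D$. Labelling the $c$ crossings, this yields the state-sum expansion
\begin{equation*}
\tilde{J}_{n,L}(A) \;=\; \sum_{s\in\{-1,+1\}^c} A^{(2n-1)(a(s)-b(s))}\,\Upsilon^{(n)}_D(s),
\end{equation*}
where $a(s)$ and $b(s)$ count the $+1$'s and $-1$'s of $s$. To prove the theorem it is enough to show that for every colored state $s\neq s_-$,
\begin{equation*}
d\bigl(A^{(2n-1)(a(s)-b(s))}\,\Upsilon^{(n)}_D(s)\bigr) \;\geq\; d\bigl(A^{-c(2n-1)}\,\Upsilon^{(n)}_D(s_-)\bigr) + 4n,
\end{equation*}
so that all non-dominant states contribute only above the lowest $4n$ coefficients of the $s_-$-term.

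To control these minimum degrees I will invoke Armond's estimate, Lemma~\ref{codys}. Because $D$ is alternating and hence $B$-adequate, the skein element $\Upsilon^{(n)}_D(s_-)$ is adequate: each circle of the bare diagram $\overline{\Upsilon^{(n)}_D(s_-)}$ visits every Jones--Wenzl box at most once, which is precisely the $B$-adequacy condition translated through the bar construction. Lemma~\ref{codys} therefore gives the equality $d(\Upsilon^{(n)}_D(s_-)) = D(\Upsilon^{(n)}_D(s_-))$. For $s\neq s_-$, two effects push the minimum degree up relative to the $s_-$-term: the coefficient ratio $A^{(4n-2)a(s)}$ coming from the skein expansion, and the change in the number of circles of the bare diagram $\overline{\Upsilon^{(n)}_D(s)}$. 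In the single-flip case $a(s)=1$, the $B$-adequacy remark of Section~2 guarantees that the underlying uncolored state loses exactly one circle compared with $S_B(D)$, which translates into a degree gain of $2n$ on the $D$-side. Adding the coefficient gain $4n-2$, one obtains $6n-2\geq 4n$ for all $n\geq 1$, with equality at $n=1$.

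The main obstacle is the case $a(s)\geq 2$, where the uncolored state may regain circles lost at earlier flips and where $\Upsilon^{(n)}_D(s)$ need no longer be adequate, so that Lemma~\ref{codys} supplies only the one-sided bound $d\geq D$. To recover the full $4n$-gap uniformly, I plan to linearize the comparison by performing the flips sequentially in an order chosen so that every intermediate partial colored state remains adequate; the single-flip estimate then applies cleanly at each stage, and already the first flip produces the required gap, with all subsequent flips only pushing the degree higher. When adequacy is unavoidably broken at some intermediate step, the slack in Lemma~\ref{codys} strengthens the bound instead of weakening it, because any circle that re-enters a Jones--Wenzl box forces additional degree growth through the annihilation axiom for $f^{(n)}$. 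Combining these ingredients produces the uniform $4n$-margin for every $s\neq s_-$, so $\tilde{J}_{n,L}$ and $\Upsilon^{(n)}_L(s_-)$ agree on their lowest $4n$ coefficients up to the common sign coming from $\pm A^{-c(2n-1)}$.
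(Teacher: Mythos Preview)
Your state-sum expansion and overall strategy are the same as the paper's, but there is a genuine gap in the execution: you treat the skein elements $\Upsilon^{(n)}_D(s)$ as if they were crossingless, and that is false. Each $n$-colored $A$- or $B$-smoothing of Figure~\ref{smoothings} still carries an $(n-1)$-colored crossing (this is precisely what makes Lemma~\ref{lemma1} a \emph{recursion}); consequently, when you replace the idempotents by identities in $\Upsilon^{(n)}_D(s_-)$ you do not get a union of circles at all but rather $L^{n-1}\sqcup C$, a diagram with $(n-1)^2$ crossings at every original crossing of $D$. The notion of ``adequate skein element'' in the paper is only defined for crossingless diagrams, so your appeal to Lemma~\ref{codys} for $\Upsilon^{(n)}_D(s_-)$, and your sentence ``each circle of the bare diagram $\overline{\Upsilon^{(n)}_D(s_-)}$ visits every Jones--Wenzl box at most once'', have no content as stated.

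This is exactly why the paper inserts a second expansion: each $\Upsilon^{(n)}_L(s)$ is further resolved via Lemma~\ref{lemma2} into crossingless pieces
\[
\Upsilon^{(n)}_L(s)=\sum_{i_1,\dots,i_k=0}^{\,n-1}\Bigl(\prod_{j=1}^{k}C_{n-1,i_j}\Bigr)\Lambda_{s,(i_1,\dots,i_k)},
\]
and the degree bookkeeping is done on the pairs $(C_{n-1,i_j},\Lambda_{s,(i_1,\dots,i_k)})$ via Lemmas~\ref{m7}--4.4. Once you work at that level, the adequate object is $\Lambda_{s_-,(n-1,\dots,n-1)}$ (this is the all-$B$ state of the $n$-cable $L^n$), and a single flip to $s_1$ changes its minimum degree by exactly~$2$, not by $2n$ as you assert; the full $4n$ margin comes from combining that $+2$ with the $4n-2$ from the coefficient $\alpha_L(n,s)$. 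Your ``$6n-2$'' computation therefore does not correspond to anything in the actual problem, and the subsequent ``linearize the flips, slack only helps'' paragraph inherits the same error. To repair the argument you must pass to the crossingless $\Lambda$-elements before invoking Lemma~\ref{codys}, and then carry out the degree comparison term by term as the paper does.
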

\begin{proof}
Assume that the link diagram $L$ has $k$ crossings and label the crossing of the link diagram by $1,..,k$. The colored Kauffman skein relation implies that 
\begin{equation*}
\tilde{J}_{n,L}\doteq\sum_{s} \alpha_L(n,s)\Upsilon_L^{(n)}(s)
\end{equation*}
where  $\alpha_L(n,s)=A^{(2n-1)\sum_{i=1}^{k}s(i)}$ and the summation runs over all functions $s:\{1,2,...,k\}\rightarrow\{-1,+1\}$. Now for any colored state $s$ of the link diagram $L$ there is a sequence of states $s_0,s_1...,s_r$ such that $s_0=s_-$ , $s_r=s$ and $s_{j-1}(i)=s_j(i)$ for all $i \in\{1,...,k\}$ except for one integer $i_l$ for which $s_{j-1}(i_l)=-1$ and $s_{j}(i_l)=1$. It is enough to show that the lowest $4n$ terms of $\alpha(n,s_-)\Upsilon_L^{(n)}(s_-)$ are never canceled by any term from  $\alpha(n,s)_L\Upsilon_L^{(n)}(s)$ for any $s$. For any colored state $s$ of the link diagram $L$ one could write 
\begin{equation*}
\Upsilon_L^{(n)}(s)=\displaystyle\sum\limits_{i_1,...,i_k=0}^{n-1}\prod_{j=1}^{k} C_{n-1,i_j}\Lambda_{s,(i_1,...,i_k)}
\end{equation*}
where $\Lambda_{s,(i_1,...,i_k)}$ is the skein element that we obtain by applying \ref{lemma2} to every crossing in $\Upsilon_L^{(n)}(s)$. The theorem follows from the following three lemmas.

\end{proof}
\begin{lemma}
\label{m7}
\begin{equation*}
d(\alpha_L(n,s_-))=d(\alpha_L(n,s_1))-4n+2
\end{equation*}
\begin{equation*}
d(\alpha_L(n,s_r))\leq d(\alpha_L(n,s_{r+1}))
\end{equation*}

$$d(C_{n-1,n-1})-d(C_{n-1,n-2})=-2$$  $$d(C_{n-1,i})\leq d(C_{n-1,i-1}).$$
\end{lemma}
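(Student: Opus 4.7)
The four assertions split into two independent groups. The first two concern $\alpha_L(n,s) = A^{(2n-1)\sum_{i=1}^{k} s(i)}$, which is a pure power of $A$, so $d(\alpha_L(n,s)) = (2n-1)\sum_i s(i)$. The key observation is that each transition $s_{j-1}\to s_j$ in the sequence described in the proof of Theorem~\ref{thm1} flips a single value from $-1$ to $+1$, hence increments $\sum_i s(i)$ by exactly $2$, and therefore shifts the minimum degree by exactly $2(2n-1) = 4n-2$. Applied to the first step $s_0 = s_- \to s_1$ this gives the first equality; applied to an arbitrary step $s_r \to s_{r+1}$ it gives the second inequality (in fact strict, since $n\geq 1$).

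The remaining two assertions reduce to computing the minimum $A$-degree of $C_{n,k} = A^{n(n-2k)} {n \brack k}_{A}$. The plan is to first recall that $[m]_A = A^{-2(m-1)} + A^{-2(m-3)} + \cdots + A^{2(m-1)}$, so $d([m]_A) = -2(m-1)$. Summing over $m$ gives $d([n]_A!) = -n(n-1)$, and the quotient definition of the quantum binomial then yields $d\bigl({n \brack k}_{A}\bigr) = -2k(n-k)$. Combining this with the prefactor $A^{n(n-2k)}$ produces the closed form
\[
d(C_{n,k}) \;=\; n(n-2k) \;-\; 2k(n-k) \;=\; n^{2} - 4nk + 2k^{2}.
\]
From here the third and fourth assertions are a one-line telescoping calculation:
\[
d(C_{n-1,i}) - d(C_{n-1,i-1}) \;=\; -4(n-1) + 2(2i-1) \;=\; 4i - 4n + 2,
\]
which equals $-2$ when $i = n-1$ (the third assertion) and is at most $-2 < 0$ for $1 \leq i \leq n-1$ (the fourth assertion, in fact strict).

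\textbf{Main obstacle.} There is no conceptual obstacle; the entire lemma is bookkeeping with $A$-degrees. The two places where care is needed are the sign/shift convention for $[m]_A$ (so that the expansion starts precisely at degree $-2(m-1)$ and the formula $d([n]_A!) = -n(n-1)$ is correct), and bundling together the monomial prefactor $A^{n(n-2k)}$ with the degree of the quantum binomial to obtain the compact quadratic expression for $d(C_{n,k})$ that makes the third and fourth identities transparent.
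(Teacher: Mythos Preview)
Your proposal is correct and follows essentially the same approach as the paper: compute $d(\alpha_L(n,s_r))$ and $d(C_{n,k})$ explicitly and read off the four assertions. The paper simply asserts the closed form $d(C_{n,i})=2i^{2}-4in+n^{2}$ without derivation, whereas you supply the intermediate computation via $d([m]_A)$ and $d([n]_A!)$; otherwise the arguments coincide.
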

\begin{proof}It is clear that
\begin{equation*}
\alpha_L(n,s_-)=A^{k - 2 k n}
\end{equation*}
and
\begin{equation*}
\alpha_L(n,s_1)=A^{-2 + k + 4 n - 2 k n}
\end{equation*}
Furthermore,
\begin{equation*}
\alpha_L(n,s_r)=A^{(2n-1)\sum_{i=1}^{k}s_r(i)}=A^{(2n-1)(-k+2r)}=A^{k - 2 k n - 2 r + 4 n r}
\end{equation*}
hence 
\begin{equation*}
d(\alpha_L(n,s_r))-d(\alpha_L(n,s_{r+1}))=2-4n.
\end{equation*}
Finally, it is clear that $$d(C_{n,i})=2i^2-4in+n^2.$$
Hence, the result follows.
\end{proof}

\begin{lemma}
\begin{equation*}
d(\Lambda_{s_-,(n-1,...,n-1)})= D(\Lambda_{s_1,(n-1,...,n-1)})-2.
\end{equation*}

\begin{equation*}
D(\Lambda_{s,(i_1,...i_{j-1},i_j,i_{j+1}...,i_k)})= D(\Lambda_{s,(i_1,...i_{j-1},i_j-1,i_{j+1},...,i_k)})\pm2.
\end{equation*}
\end{lemma}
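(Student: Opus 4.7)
The plan is to reduce both identities to a circle-counting exercise via Armond's criterion in Lemma \ref{codys}. Recall that for $S\in\mathcal{S}(S^2)$ one has $D(S)=d(\bar S)$, and since $\bar S$ is a disjoint union of $|\bar S|$ circles its Kauffman bracket is $(-A^2-A^{-2})^{|\bar S|}$, giving $D(S)=-2|\bar S|$. Hence both identities become statements about how the circle count $|\bar\Lambda|$ varies with $s$ or with one of the indices $i_j$.

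For the first identity I would first verify that $\Lambda_{s_-,(n-1,\ldots,n-1)}$ is an adequate skein element in Armond's sense, so that Lemma \ref{codys} gives $d(\Lambda_{s_-,(n-1,\ldots,n-1)})=D(\Lambda_{s_-,(n-1,\ldots,n-1)})$. The index $i_j=n-1$ selects the basis element of Lemma \ref{lemma2} (applied at color $n-1$) with $(n-1)-k=0$ strands bending across, that is the identity on each side. After erasing the Jones-Wenzl boxes one obtains the $(n-1)$-parallel cable of the classical $B$-state of $L$ together with a short color-$1$ arc at each crossing coming from the $B$-smoothing in Lemma \ref{lemma1}. Because $L$ is alternating and hence $B$-adequate, no circle of this picture revisits any Jones-Wenzl region, which is exactly the adequacy condition. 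Now $\bar\Lambda_{s_-,(n-1,\ldots,n-1)}$ and $\bar\Lambda_{s_1,(n-1,\ldots,n-1)}$ coincide outside the single crossing at which $s_-$ and $s_1$ differ; there one has the $n$-colored $B$-smoothing and the other the $n$-colored $A$-smoothing, and by $B$-adequacy of $L$ this flip fuses two distinct circles into one in the cabled picture. Thus $|\bar\Lambda_{s_1,(n-1,\ldots,n-1)}|=|\bar\Lambda_{s_-,(n-1,\ldots,n-1)}|-1$, so $D(\Lambda_{s_1,(n-1,\ldots,n-1)})=D(\Lambda_{s_-,(n-1,\ldots,n-1)})+2$, and combining with the adequacy identity yields the first equation.

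For the second identity the argument is local at the $j$-th crossing: the basis element of Lemma \ref{lemma2} at color $n-1$ with parameter $k$ has $k$ strands going straight through on each side and $(n-1)-k$ strands forming nested cap and cup arcs in the middle, while the element with parameter $k-1$ has one additional strand pulled into the cap/cup region. After erasing the Jones-Wenzl boxes this is the classical local move that replaces a straight segment by a cap-cup pair; depending on how the outside of the crossing is wired, it either splits one circle into two or merges two circles into one, so $|\bar\Lambda|$ changes by $\pm 1$ and therefore $D$ changes by $\mp 2$.

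The main obstacle is the adequacy check for $\Lambda_{s_-,(n-1,\ldots,n-1)}$ and the circle-fusion statement under a single $B\to A$ flip in the cabled picture. Both reduce to the standard fact that the state graph $G_{S_B(L)}$ of an alternating diagram has no loops, so that distinct circles of $S_B(L)$ are joined by each crossing edge of the state graph; this local property survives cabling because each original arc of $L$ carries exactly one Jones-Wenzl projector in $\Lambda_{s_-,(n-1,\ldots,n-1)}$, so each cabled circle passes through each projector region at most once.
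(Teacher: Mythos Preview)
Your proposal is correct and follows essentially the same route as the paper: both arguments identify $\overline{\Lambda_{s_-,(n-1,\ldots,n-1)}}$ with the all-$B$ state of the $n$-cable $L^n$, invoke $B$-adequacy of the alternating diagram to get adequacy of the skein element (hence $d=D$ via Lemma~\ref{codys}) and a drop of one circle under the flip $s_-\to s_1$, and then observe that changing a single index $i_j$ by one alters the circle count by $\pm 1$. Your write-up is in fact more explicit than the paper's about the formula $D(S)=-2|\bar S|$ and about the local cap/cup picture; the only phrasing to tighten is the claim that ``each original arc of $L$ carries exactly one Jones--Wenzl projector,'' since the projectors sit at the four corners of each crossing region rather than one per arc---but the conclusion you draw (each circle of $\bar\Lambda$ meets each idempotent region at most once) is exactly the $B$-adequacy of $L^n$, which is what the paper uses.
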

\begin{proof}
When we replace the idempotent by the identity in the skein element $\Upsilon^{(n)}(s_-)$ we obtain the diagram $L^{n-1}\dot{\bigcup}C$ where $C$ is a link diagram composed of a disjoint union of unit circles each one of them bounds a disk and $L^{n-1}$ is the $(n-1)$-parallel of $L$. Note that state $\overline{\Lambda_{s_-,(n-1,...,n-1)}}$ is exactly the all $B$-state of the link diagram $L^n$ and it is also the all B-state of $L^{n-1}\dot{\bigcup}C$. Since $L$ is alternating then the the number of circles in $\overline{\Lambda_{s_1,(n-1,...,n-1)}}$ is one less than the number of circles in $\overline{\Lambda_{s_-,(n-1,...,n-1)}}$. In other words, the number of circles in the all $B$-state of $\overline{\Upsilon^{(n)}(s_1)}$ is one less than the number of circles in the all $B$-state of the diagram $L^{n-1}\dot{\bigcup}C$. Thus,
$$D(\Lambda_{s_-,(n-1,...,n-1)})= D(\Lambda_{s_1,(n-1,...,n-1)})-2.$$
 Moreover the skein element $\Lambda_{s_-,(n-1,...,n-1)}$ is adequate since $L$ is alternating. Hence by Lemma \ref{codys} we have $$d(\Lambda_{s_-,(n-1,...,n-1)})= D(\Lambda_{s_1,(n-1,...,n-1)})-2.$$ For the second part, note that the number of circles in the diagrams $\overline{\Lambda_{s,(i_1,...i_{j-1},i_j,i_{j+1}...,i_k)}}$ and $\overline{\Lambda_{s,(i_1,...i_{j-1},i_j-1,i_{j+1},...,i_k)}}$  differs by $1$. Hence by \ref{codys} we obtain $$D(\Lambda_{s,(i_1,...i_{j-1},i_j,i_{j+1}...,i_k)})= D(\Lambda_{s,(i_1,...i_{j-1},i_j-1,i_{j+1},...,i_k)})\pm2$$. 
\end{proof}
\begin{lemma}
\begin{equation*}
d(\Upsilon^{(n)}(s_-))=d((C_{n-1,n-1})^k\Lambda_{s_-,(n-1,...,n-1)}).
\end{equation*}
\end{lemma}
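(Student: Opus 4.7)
The plan is to show that in the Lemma~\ref{lemma2} expansion
\[
\Upsilon^{(n)}(s_-) \;=\; \sum_{(i_1,\dots,i_k) \in \{0,\ldots,n-1\}^k} \prod_{j=1}^{k} C_{n-1, i_j}\; \Lambda_{s_-, (i_1,\dots,i_k)},
\]
the term indexed by $(n-1,\dots,n-1)$ realizes the minimum $A$-degree \emph{strictly}, so that no other term can cancel its leading contribution and the lemma follows. Concretely, the strict inequality to establish is
\[
\sum_{j=1}^k d(C_{n-1, i_j}) + d(\Lambda_{s_-, (i_1,\dots,i_k)}) \;>\; k\,d(C_{n-1, n-1}) + d(\Lambda_{s_-, (n-1,\dots,n-1)})
\]
for every tuple $(i_1,\dots,i_k) \neq (n-1,\dots,n-1)$.

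Two ingredients combine. On the coefficient side, the closed form $d(C_{n-1,i}) = 2i^2 - 4i(n-1) + (n-1)^2$ derived inside Lemma~\ref{m7} gives $d(C_{n-1, i_j}) - d(C_{n-1, n-1}) = 2(n-1-i_j)^2 \geq 0$ with equality iff $i_j = n-1$, so any deviation contributes at least $2$ to the first bracket. On the skein side, the preceding lemma showed that $\Lambda_{s_-, (n-1,\dots,n-1)}$ is adequate, whence Lemma~\ref{codys} forces $d = D$ at this tuple and only the weaker bound $d \geq D$ elsewhere. It therefore suffices to prove
\[
D(\Lambda_{s_-, (i_1,\dots,i_k)}) \;\geq\; D(\Lambda_{s_-, (n-1,\dots,n-1)})
\]
for every tuple, with strict inequality whenever the tuples differ. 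The diagram $\overline{\Lambda_{s_-, (n-1,\dots,n-1)}}$ is, by the preceding lemma, the all-$B$ state of the cable diagram $L^n$, and by the second identity of that lemma each unit decrement of some $i_j$ alters the circle count of the corresponding $\overline{\Lambda}$ by $\pm 1$. The alternating hypothesis on $L$ forces the all-$B$ state to uniquely maximize the circle count among the family $\{\,\overline{\Lambda_{s_-, (i_1,\dots,i_k)}}\,\}$, so any deviation strictly reduces circles and hence strictly increases $D$. Combining with the quadratic coefficient gain closes the inequality.

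The main obstacle is disambiguating the sign in the $\pm 2$ shift from the preceding lemma: in isolation it only yields $|D(\Lambda_{s_-, (i_1,\dots,i_k)}) - D(\Lambda_{s_-, (n-1,\dots,n-1)})| \leq 2\sum_j (n-1-i_j)$, a bound that in the worst case would exactly offset the quadratic coefficient gain when all $i_j \in \{n-2,n-1\}$. Resolving this needs the extremality argument above, namely that the all-$B$ state of the cable diagram of an alternating link strictly maximizes the circle count in the present family, a fact inherited from alternating-ness of $L$ via an adequacy/Tait-type argument. Once that monotonicity is locked in, the remaining inequalities are a direct combination of the explicit formulas from Lemma~\ref{m7} and the estimate of Lemma~\ref{codys}.
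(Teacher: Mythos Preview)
Your approach is exactly the paper's: expand $\Upsilon^{(n)}(s_-)$ via Lemma~\ref{lemma2} and combine the two preceding lemmas to show that the term at $(n-1,\dots,n-1)$ realises the minimum degree. The paper's proof is a one-line assertion that this combination is ``direct''; you have unpacked it carefully and correctly isolated the only non-immediate step, namely the edge case $i_j\in\{n-2,n-1\}$ for all $j$, where the $\pm 2$ ambiguity in the $D$-shift could in principle exactly offset the quadratic coefficient gain.

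However, your proposed resolution of that edge case is not correct as stated. You claim that the all-$B$ state uniquely maximises the circle count among the family $\{\overline{\Lambda_{s_-,\vec{i}}}\}$, appealing to ``an adequacy/Tait-type argument''. But $B$-adequacy only guarantees that the all-$B$ state uniquely minimises the Kauffman-bracket \emph{degree} $(\#A-\#B)-2\#\text{circles}$; it does \emph{not} say the all-$B$ state maximises $\#\text{circles}$ by itself. Indeed, for an alternating diagram the all-$A$ state may well have more circles than the all-$B$ state, so your monotonicity claim on $D$ is false in general (already for $n=2$ the family $\{\overline{\Lambda_{s_-,\vec{i}}}\}$ runs over all Kauffman states of $L$, and $(0,\dots,0)$ can have strictly more circles than $(1,\dots,1)$).

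What does close the gap is a more local use of adequacy: since $\Lambda_{s_-,(n-1,\dots,n-1)}$ is an adequate skein element (established in the preceding lemma), the \emph{first} decrement $i_j:n-1\to n-2$ corresponds to a reconnection of two arcs lying on distinct circles of $\overline{\Lambda_{s_-,(n-1,\dots,n-1)}}$, hence merges them and forces $D$ to go \emph{up} by $2$. That first step therefore contributes a net gain of $+4$; every subsequent decrement contributes $\geq 0$ by your own estimate, so the total is $\geq 4>0$ whenever $\vec{i}\neq(n-1,\dots,n-1)$. This is presumably what the paper means by ``directly'', but it is not the global extremality statement you invoked.
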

\begin{proof}
The previous two lemmas imply directly that the lowest term in $\Upsilon^{(n)}(s_-)$ is coming from the skein element $(C_{n-1,n-1})^k\Lambda_{s_-,(n-1,...,n-1)}$ and this term is never canceled by any other term in the summation.
\end{proof}

\begin{theorem}
\label{thm2}
Let $L$ be an alternating link diagram and let $\Upsilon^{(n+1)}(s_-)$ be its corresponding $(n+1)$-colored $B$-state skein element. Then
\begin{equation*}
\Upsilon_L^{(n+1)}(s_-)\doteq_{4n}\tilde{J}_{n,L}
\end{equation*}
\end{theorem}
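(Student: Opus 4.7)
The plan is to combine Theorem \ref{thm1} with a direct skein-theoretic comparison between the $(n+1)$-colored and $n$-colored $B$-state skein elements. Since Theorem \ref{thm1} yields $\tilde{J}_{n,L} \doteq_{4n} \Upsilon_L^{(n)}(s_-)$, proving Theorem \ref{thm2} reduces to establishing the single skein identity
\begin{equation*}
\Upsilon_L^{(n+1)}(s_-) \doteq_{4n} \Upsilon_L^{(n)}(s_-).
\end{equation*}

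To establish this identity, I would mirror the proof of Theorem \ref{thm1} at color $n+1$: apply Lemma \ref{lemma2} to the internal $n$-strand crossings produced by each $(n+1)$-colored $B$-smoothing to obtain an expansion
\begin{equation*}
\Upsilon_L^{(n+1)}(s_-) = \sum_{(i_1,\ldots,i_k)\in\{0,\ldots,n\}^k} \prod_{j=1}^{k} C_{n,i_j}\, \Lambda'_{s_-,(i_1,\ldots,i_k)},
\end{equation*}
and then run the three-lemma degree analysis from the proof of Theorem \ref{thm1} with $n-1$ replaced by $n$. From $d(C_{n,i})=2i^2-4in+n^2$ one gets $d(C_{n,n})-d(C_{n,n-1})=-2$; since $L$ is alternating, the skein element $\Lambda'_{s_-,(n,\ldots,n)}$ is adequate and Lemma \ref{codys} gives $d=D$; combining these as in the lemmas following \ref{m7}, the minimum-degree contribution to $\Upsilon_L^{(n+1)}(s_-)$ comes uniquely from the dominant summand $(C_{n,n})^k\Lambda'_{s_-,(n,\ldots,n)}$ and is not canceled.

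The core identification step is then to match the lowest $4n$ terms of this dominant summand with those of the corresponding dominant summand $(C_{n-1,n-1})^k\Lambda_{s_-,(n-1,\ldots,n-1)}$ from $\Upsilon_L^{(n)}(s_-)$. After substituting identities for the internal Jones--Wenzl idempotents, both diagrams reduce to parallels of the all-$B$ state of $L$, differing only by one extra cable. Using the absorption property (2) of \ref{properties} and the loop relation (1) of \ref{properties2}, the extra cable in the $(n+1)$-version contracts against the internal idempotents to produce a uniform $A$-power and overall sign, which is precisely what the relation $\doteq_{4n}$ absorbs once it is combined with the $C_{n,n}^k / C_{n-1,n-1}^k$ scalar ratio.

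The main obstacle will be the degree estimate for the non-dominant summands: one must show that for any multi-index $(i_1,\ldots,i_k)\neq (n,\ldots,n)$, the minimum degree of $\prod_j C_{n,i_j}\Lambda'_{s_-,(i_1,\ldots,i_k)}$ strictly exceeds $d\bigl((C_{n,n})^k\Lambda'_{s_-,(n,\ldots,n)}\bigr)+4n-1$. Each unit decrease of some $i_j$ from $n$ raises the scalar degree by $+2$ (via $C_{n,i}$) and, by adequacy of the alternating diagram, changes the circle count of $\overline{\Lambda'}$ by exactly $\pm 1$, shifting $D$ by $\pm 2$. The subtle point, which is where the alternating hypothesis enters crucially, is that in every reduction from the dominant multi-index the net shift is forced to be non-negative, so that the accumulated shift reaches $4n$ before any $i_j$ falls far enough to produce a competing low-degree contribution.
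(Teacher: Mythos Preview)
Your approach has a genuine gap in the ``main obstacle'' paragraph, and it is not a subtlety --- the claimed degree estimate is simply false. You assert that every non-dominant summand $\prod_j C_{n,i_j}\Lambda'_{s_-,(i_1,\ldots,i_k)}$ has minimum degree exceeding that of the dominant one by at least $4n$. But your own computation shows that lowering a single $i_j$ from $n$ to $n-1$ raises $d(C_{n,i_j})$ by exactly $2$, while the circle count of $\overline{\Lambda'}$ changes by $\pm 1$, so $D(\Lambda')$ shifts by $\mp 2$. The net shift is therefore $0$ or $4$, not $4n$. Hence already the summands adjacent to the dominant one contribute to the lowest $4n$ coefficients, and your ``core identification step'' cannot be reduced to comparing single dominant terms; you would have to compare the two full sums, which is the original problem. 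The source of the confusion is that in Theorem~\ref{thm1} the $4n$ gap comes from the \emph{state} scalar $\alpha_L(n,s)=A^{(2n-1)\sum s(i)}$, which jumps by $4n-2$ when a single state value flips; the coefficients $C_{n,i}$ in the Lemma~\ref{lemma2} expansion carry no such large jump.

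The paper proceeds quite differently and avoids any $\Lambda$-expansion. It observes that, because $L$ is alternating, $\Upsilon_L^{(n+1)}(s_-)$ locally consists of an inner $n$-cable passing through a chain of $(n+1)$-idempotents together with an outer single strand. It then invokes Theorem~9 and Lemma~10 of Armond~\cite{Cody2}, which say precisely that removing one such idempotent from this local picture preserves the lowest $4n$ coefficients. Iterating around each circle strips all but one idempotent; the remaining one is absorbed via identity~(1) of~\eqref{properties2}, yielding the $n$-colored diagram for $\tilde J_{n,L}$ directly. The essential input you are missing is this local $\doteq_{4n}$-preserving move; without it (or an equivalent replacement), the reduction from $\Upsilon_L^{(n+1)}(s_-)$ to $\Upsilon_L^{(n)}(s_-)$ cannot be carried out by degree bookkeeping alone.
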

\begin{proof}
Since $L$ is an alternating link diagram then the skein element $\Upsilon^{(n+1)}(s_-)$ must look locally as in Figure \ref{local}.
\begin{figure}[H]
  \centering
   {\includegraphics[scale=0.25]{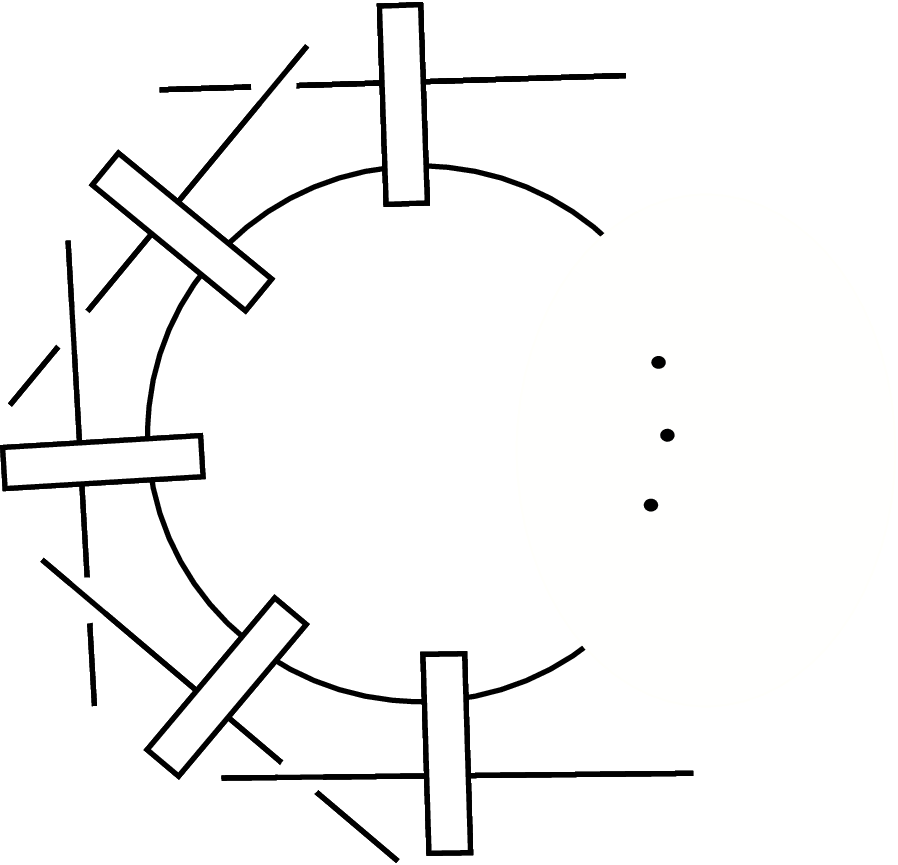}
    \put(-85,55){$1$}
    \put(-32,2){$n$}
     \caption{Local view of the skein element $\Upsilon^{(n+1)}(s_-)$}
  \label{local}}
\end{figure}
It follows from Theorem $9$ and Lemma $10$  in \cite{Cody2} that
\begin{eqnarray*}
\label{properties}
\hspace{0 mm}
    \begin{minipage}[h]{0.21\linewidth}
        \vspace{0pt}
        \scalebox{0.24}{\includegraphics{proof_1}}
         \put(-80,55){$1$}
    \put(-32,2){$n$}
               
   \end{minipage}
  \doteq_{4n} \hspace{5pt}
     \begin{minipage}[h]{0.1\linewidth}
        \vspace{0pt}
         \hspace{50pt}
        \scalebox{0.24}{\includegraphics{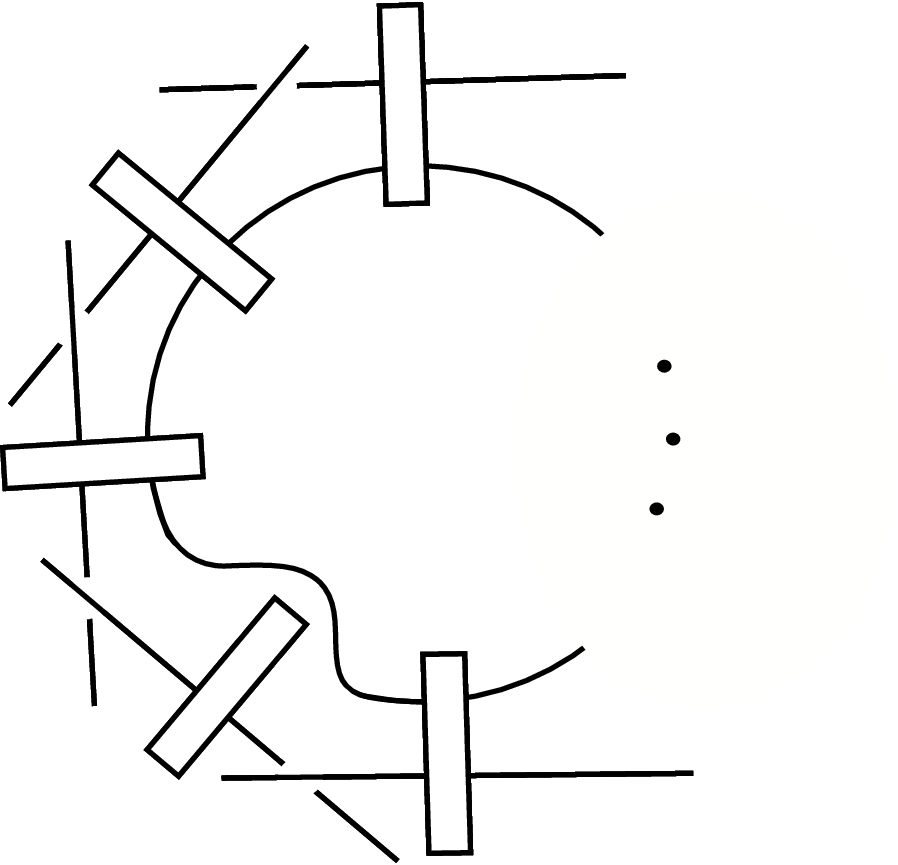}}
          \put(-80,55){$1$}
    \put(-32,2){$n$}
       \end{minipage}
    \end{eqnarray*}
The details of the previous equation can be found in \cite{Cody2} and we will not repeat them here.

 The previous step can be applied around the circle until we reach the final idempotent :
    \begin{eqnarray*}
\hspace{0 mm}
    \begin{minipage}[h]{0.18\linewidth}
        \vspace{0pt}
        \scalebox{0.24}{\includegraphics{proof_1}}
         \put(-80,55){$1$}
    \put(-32,2){$n$}
               
   \end{minipage}
  \doteq_{4n} \hspace{5pt}
     \begin{minipage}[h]{0.1\linewidth}
        \vspace{0pt}
         \hspace{50pt}
        \scalebox{0.24}{\includegraphics{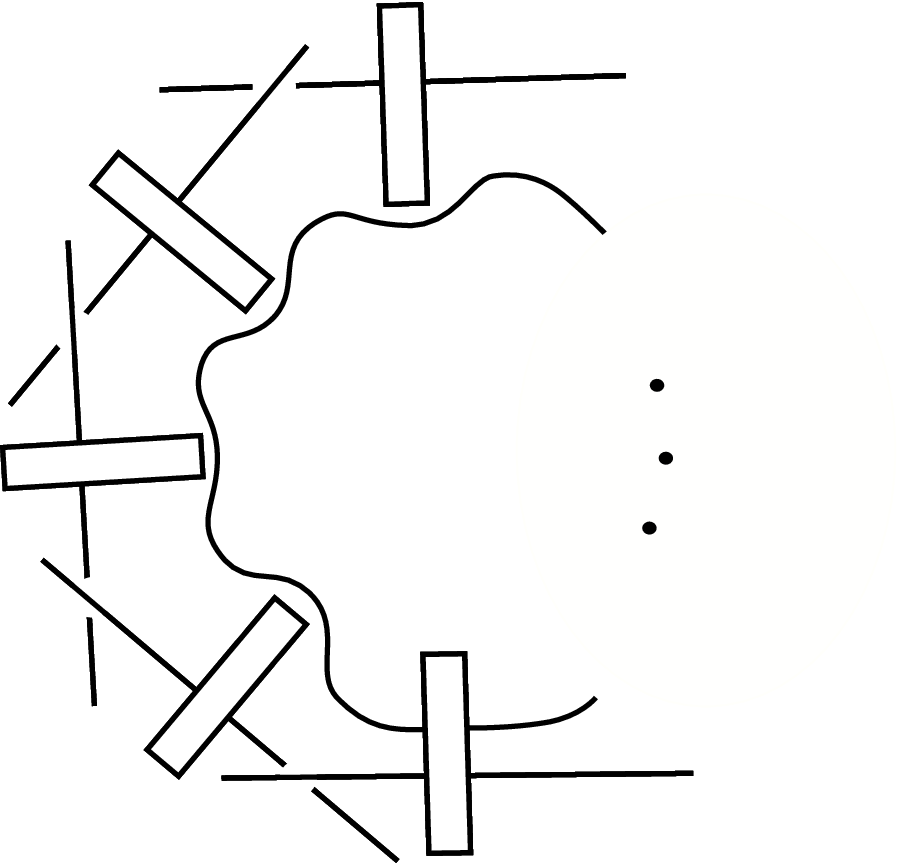}}
          \put(-70,55){$1$}
    \put(-32,2){$n$}
       \end{minipage}\hspace{40pt}\doteq_{4n} 
     \begin{minipage}[h]{0.1\linewidth}
        \vspace{0pt}
         \hspace{200pt}
        \scalebox{0.24}{\includegraphics{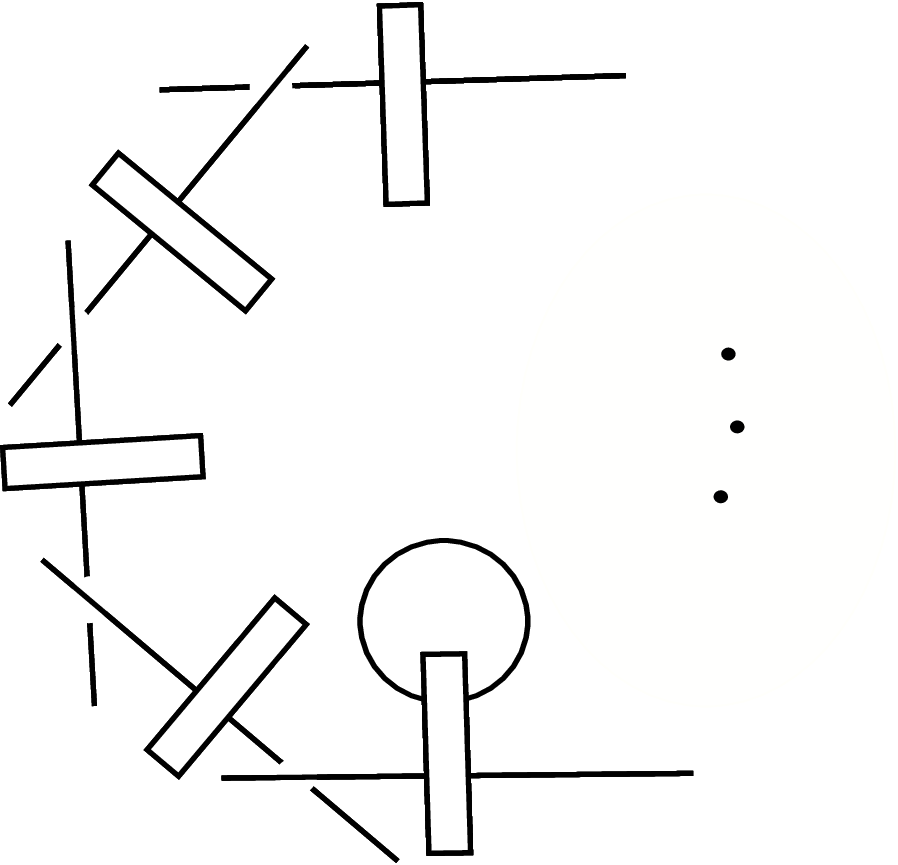}}
          \put(-60,38){$1$}
    \put(-32,2){$n$}
       \end{minipage}
    \end{eqnarray*}
Equation \ref{properties2} implies   
 \begin{eqnarray*}
     \begin{minipage}[h]{0.1\linewidth}
        \vspace{0pt}
         \hspace{50pt}
        \scalebox{0.24}{\includegraphics{proof_4}}
         \put(-60,38){$1$}
    \put(-32,2){$n$}
       \end{minipage}\hspace{40pt}\doteq_{4n}\frac{\Delta_{n+1}}{\Delta_{n}} \hspace{5pt}
     \begin{minipage}[h]{0.13\linewidth}
        \vspace{0pt}
         \hspace{200pt}
        \scalebox{0.24}{\includegraphics{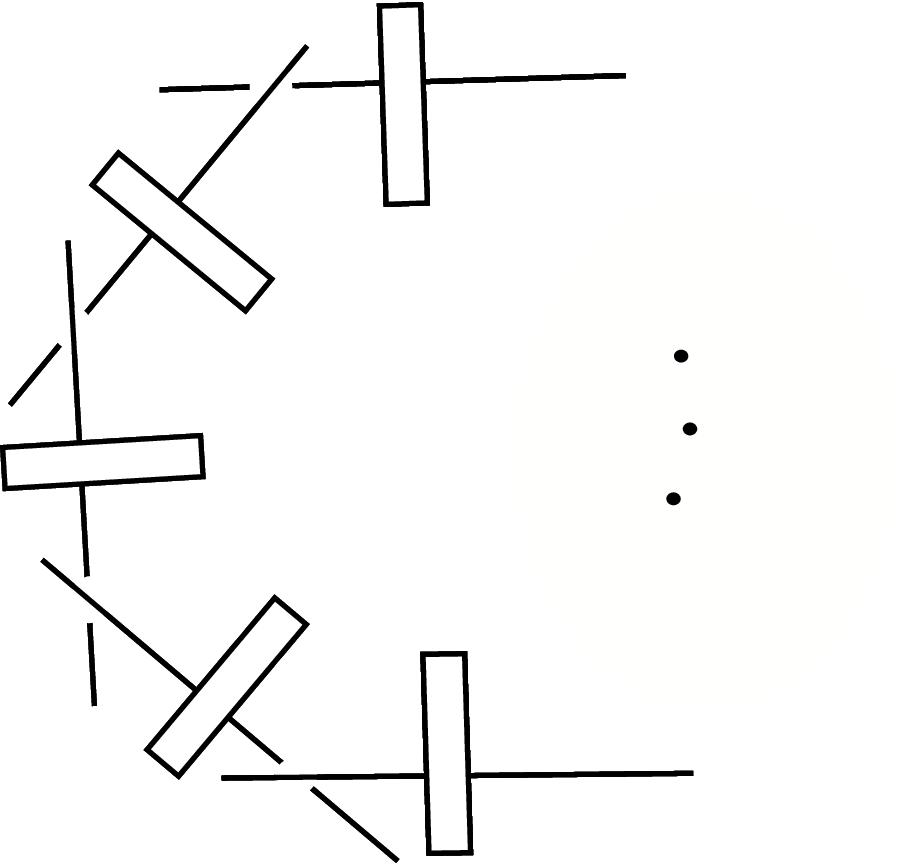}}
        \put(-32,2){$n$}
       \end{minipage}\hspace{40pt}\doteq_{4n}
     \begin{minipage}[h]{0.11\linewidth}
        \vspace{0pt}
         \hspace{205pt}
        \scalebox{0.24}{\includegraphics{proof_5}}
        \put(-32,2){$n$}
       \end{minipage}
    \end{eqnarray*}
Applying this procedure on every circle in $\Upsilon^{(n+1)}(s_-)$, we eventually obtain
\begin{equation*}
\Upsilon^{(n+1)}(s_-)\doteq_{4n}\tilde{J}_{n,L}.
\end{equation*}

\end{proof}
Theorems \ref{thm1} and \ref{thm2} imply immediately the following result.
\begin{corollary}Let $L$ be an alternating link diagram. Then
\begin{equation*}
\tilde{J}_{n+1,L}\doteq_{4n}\tilde{J}_{n,L}
\end{equation*}

\end{corollary}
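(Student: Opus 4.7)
The plan is to combine Theorems \ref{thm1} and \ref{thm2} by routing through the intermediate skein element $\Upsilon_L^{(n+1)}(s_-)$. First I would apply Theorem \ref{thm1} at color $n+1$, which yields
\begin{equation*}
\tilde{J}_{n+1,L}\doteq_{4(n+1)}\Upsilon_L^{(n+1)}(s_-).
\end{equation*}
Since agreement of the leading $4(n+1)$ coefficients trivially implies agreement of the leading $4n$ coefficients, this weakens to $\tilde{J}_{n+1,L}\doteq_{4n}\Upsilon_L^{(n+1)}(s_-)$. Theorem \ref{thm2} supplies $\Upsilon_L^{(n+1)}(s_-)\doteq_{4n}\tilde{J}_{n,L}$, and chaining the two gives the required $\tilde{J}_{n+1,L}\doteq_{4n}\tilde{J}_{n,L}$.

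The one small point worth verifying is that the relation $\doteq_{m}$ is transitive. By its definition, $P_1\doteq_m P_2$ means that the leading $m$ coefficients of $P_1$ and $P_2$ coincide after multiplication by some global sign $\varepsilon\in\{-1,+1\}$. If $P_1\doteq_m P_2$ with sign $\varepsilon_1$ and $P_2\doteq_m P_3$ with sign $\varepsilon_2$, then $P_1$ and $P_3$ agree on their leading $m$ coefficients up to the global sign $\varepsilon_1\varepsilon_2\in\{-1,+1\}$, so transitivity is automatic.

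There is really no obstacle to overcome: the corollary is a purely formal bookkeeping consequence of the two preceding theorems, whose combined content is that both $\tilde{J}_{n,L}$ and $\tilde{J}_{n+1,L}$ share their first $4n$ coefficients (up to a common sign) with the same skein element $\Upsilon_L^{(n+1)}(s_-)$. Consequently the stability statement conjectured by Dasbach and Lin drops out, and the tail $T_L(A)$ of the colored Jones polynomial of an alternating link is well-defined.
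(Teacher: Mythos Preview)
Your proposal is correct and follows exactly the approach the paper takes: the corollary is stated there as an immediate consequence of Theorems~\ref{thm1} and~\ref{thm2}, and you have simply spelled out the routing through $\Upsilon_L^{(n+1)}(s_-)$ together with the trivial weakening from $\doteq_{4(n+1)}$ to $\doteq_{4n}$ and the transitivity of $\doteq_m$.
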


\section*{Acknowledgements}
I am grateful for Oliver Dasbach for his guidance and advice. I am very thankful
to Kyle Istvan for carefully reading the paper and offering many useful remarks. I also would like to thank Cody Armond for various conversation about this paper.

\end{document}